\newtheorem{theorem}{Theorem}[section]
\newtheorem{proposition}[theorem]{Proposition}
\newtheorem{lemma}[theorem]{Lemma}
\newtheorem{corollary}[theorem]{Corollary}
\newtheorem{conjecture}[theorem]{Conjecture}
\newtheorem{definition}[theorem]{Definition}
\newtheorem{remark}[theorem]{Remark}
\newtheorem{notation}[theorem]{Notation}
\newenvironment{proof}{{\noindent \sc Proof.}}{\hfill $\Qed$\\}
\newcommand{\tA}{\breve{A}}
\newcommand{\Qed}{\rule{2.5mm}{3mm}}
\newcommand{\Ga}{\Gamma}
\newcommand{\CC}{\mathbb{C}}
\DeclareMathOperator{\mat}{Mat}
\newcommand{\MX}{\mat_X(\CC)}
\newcommand{\e}{E^{*}}
\newcommand{\R}{{\cal R}}
\newcounter{case}
\renewcommand{\thecase}{\arabic{case}}
\newcounter{subcase}
\numberwithin{subcase}{case}
\begin{document}


\begin{center}
{\bf\Large Distance-regular graphs with classical parameters that support a uniform structure: case $q \ge 2$} \\ [+4ex]
Blas Fernández{\small$^{a,b}$},  
Roghayeh Maleki{\small$^{a,b,}$\footnote{Corresponding author: roghayeh.maleki@famnit.upr.si}},  
\v Stefko Miklavi\v c{\small$^{a, b,c}$},
\\
Giusy Monzillo{\small$^{a, b}$} 
\\ [+2ex]
{\it \small 
$^a$University of Primorska, UP IAM, Muzejski trg 2, 6000 Koper, Slovenia\\
$^b$University of Primorska, UP FAMNIT, Glagolja\v ska 8, 6000 Koper, Slovenia\\
$^c$IMFM, Jadranska 19, 1000 Ljubljana, Slovenia}
\end{center}


\begin{abstract}
Let $\Ga=(X,\mathcal{R})$ denote a finite, simple, connected, and undirected non-bipartite graph with vertex set $X$ and edge set $\mathcal{R}$. Fix a vertex $x \in X$,  and define $\mathcal{R}_f = \mathcal{R} \setminus \{yz \mid \partial(x,y) = \partial(x,z)\}$, where $\partial$ denotes the path-length distance in $\Ga$. Observe that the graph $\Ga_f=(X,\mathcal{R}_f)$ is bipartite. We say that $\Ga$ supports a uniform structure with respect to $x$  whenever $\Ga_f$ has a uniform structure with respect to $x$ in the sense of Miklavi\v{c} and Terwilliger \cite{MikTer}.

Assume  that  $\Ga$ is a distance-regular graph with classical parameters $(D,q,\alpha,\beta)$ and diameter $D\geq 4$. Recall  that $q$ is an integer such that $q \not \in \{-1,0\}$. The purpose of this paper is to study when $\Ga$ supports a uniform structure with respect to $x$. We studied the case $q \le 1$ in \cite{FMMM}, and so in this paper we assume $q \geq 2$. Let $T=T(x)$ denote the Terwilliger algebra of $\Ga$ with respect to $x$. Under an additional assumption that every irreducible $T$-module with endpoint $1$ is thin, we show that if $\Ga$ supports a uniform structure with respect to $x$, then  either $\alpha = 0$ or $\alpha=q$, $\beta=q^2(q^D-1)/(q-1)$, and $D \equiv 0 \pmod{6}$.

\end{abstract}
\begin{quotation}
\noindent {\em Keywords:} 
distance-regular graphs, uniform posets, Terwilliger algebra. 

\end{quotation}

\begin{quotation}
\noindent 
{\em Math. Subj. Class.:}  05E99, 05C50.
\end{quotation}


\section{Introduction}
\label{sec:intro}

Throughout this paper, all graphs will be finite, simple, connected, and undirected. The definition of a uniform structure for a bipartite distance-regular graph was introduced in \cite{MikTer}, and then generalized to the case of an arbitrary connected bipartite graph in \cite{FMMM}.  Let $\Ga$ denote a connected bipartite graph, fix a vertex $x$ of $\Ga$ and let $T=T(x)$ denote the corresponding Terwilliger algebra with respect to $x$, and let $L=L(x) \in T$ (resp. $R=R(x) \in T$) denote the lowering matrix (resp. raising matrix) of $\Ga$ with  respect to $x$. We say that $\Ga$ has a {\em uniform structure with respect to} $x$ if certain linear dependencies among $RL^2$, $LRL$, $L^2R$, and $L$ are satisfied (see Section \ref{sec:uniform} for more details). Uniform structures of $Q$-polynomial bipartite distance-regular graphs were studied in detail in \cite{MikTer}, where it was shown that except for one special case, a uniform structure is always attained.

Assume now that $\Ga=(X,\mathcal{R})$ is a non-bipartite graph with vertex set $X$ and edge set $\mathcal{R}$. Fix a vertex $x \in X$, and define $\mathcal{R}_f = \mathcal{R} \setminus \{yz \mid \partial(x,y) = \partial(x,z)\}$. Observe that the graph $\Ga_f=(X,\mathcal{R}_f)$ is bipartite. We say that $\Ga$ {\em supports a uniform structure with respect to} $x$ if $\Ga_f$ admits a uniform structure with respect to $x$. The study of non-bipartite graphs that support a uniform structure was initiated by Worawannotai \cite{w:dual}, who essentially showed that (non-bipartite) dual polar graphs support a uniform structure. Later, Hou et al. \cite{hou2018folded} showed that  a folded $(2D+1)$-hypercube supports a uniform structure. 

The main purpose of this paper is to continue the investigation of which non-bipartite $Q$-polynomial distance-regular graphs support a uniform structure. As the complete answer to this problem seems to be currently beyond our reach, we will concentrate on distance-regular graphs with classical parameters (which are all also $Q$-polynomial). Let $\Ga$ denote a distance-regular graph with classical parameters $(D,q,\alpha,\beta)$ with $D \ge 3$. Recall that $q$ is an integer, different from $0$ and $-1$. In our previous paper \cite{FMMM} we assumed that $q \le 1$. We first briefly describe the results obtained in \cite{FMMM}. 

The graph $\Ga$ is said to be of {\em negative type} whenever $q \le -2$. The first main result of \cite{FMMM} concerns graphs of negative type. We showed that if $\Ga$ is of negative type with $D \ge 4$, then $\Ga$ supports a uniform structure with respect to $x$ if and only if $\Ga$ is the dual polar graph $^2 A_{2D-1}(-q)$. The second main result of \cite{FMMM} concerns graphs with classical parameters with $q=1$. We gave a complete classification of these graphs that support a uniform structure with respect to $x$. 

In this paper, we study graphs with classical parameters with $q \ge 2$ and $D \ge 4$. Assume in  addition that every irreducible $T$-module with endpoint $1$ is thin. The main result of this paper is that if $\Ga$ supports a uniform structure with respect to $x$, then  either $\alpha = 0$, or $\alpha=q$, $\beta=q^2(q^D-1)/(q-1)$, and $D \equiv 0 \pmod{6}$. We will study the case  $\alpha=0$ in detail in our future paper.

This paper is organized as follows. In Section \ref{sec:ter}, we recall the definition and some basic properties of the Terwilliger algebra $T$ of a graph $\Ga$ (with respect to a fixed vertex). In Section \ref{sec:uniform}, we give a formal definition of a uniform structure for bipartite graphs, and we prove some supplementary results. The notion of  distance-regular graphs with classical parameters and that of strongly regular graphs are briefly reviewed in Section \ref{sec:drg}. Next, in Sections \ref{sec:locI}, \ref{sec:locII}, and \ref{sec:localgraph}, we review some results on thin irreducible $T$-modules with endpoint $1$, describe their local eigenvalues, and provide more details and results on the local graph of $\Ga$. Finally, in Section \ref{sec:main}, we consider  distance-regular graphs with classical parameters $(D,q,\alpha,\beta)$ with $q \geq 2$ and $D \ge 4$, and prove our main results.


\section{Terwilliger algebra of a graph}
\label{sec:ter}

Let $\Ga=(X,\mathcal{R})$ denote a graph with vertex set $X$ and edge set $\mathcal{R}$. In this section, we recall the definition of a Terwilliger algebra of $\Ga$. Let $\partial$ denote the path-length distance function of $\Ga$. The diameter $D$ of $\Ga$ is defined as $\max\{\partial(x,y) \mid x,y \in X \}$. Let $\MX$ denote the matrix algebra over the field of complex numbers $\CC$, consisting of all matrices whose rows and columns are indexed by $X$. Let $V$ denote the vector space over $\CC$ consisting of column vectors whose coordinates are indexed by $X$. We observe that $\MX$ acts on $V$ by left multiplication. We call $V$ the \emph{standard module}. We endow $V$ with the Hermitian inner 
product $\langle \cdot, \cdot \rangle$ that satisfies $\langle u,v \rangle = u^{\top}\overline{v}$ for 
$u,v \in V$, where $\top$ denotes the transposition and $\overline{\phantom{v}}$ denotes the complex conjugation. For any $y \in X$, let $\widehat{y}$ denote the element of $V$ with $1$ in the ${y}$-coordinate and $0$ in all other coordinates. We observe that $\{\widehat{y}\;|\;y \in X\}$ is an orthonormal basis for $V$. 

Let $A \in \MX$ denote the \emph{adjacency matrix} of $\Gamma$:
$$\left( A\right) _{yz}=
\begin{cases}
	\hspace{0.2cm} 1 \hspace{0.5cm} \text{if} & \partial(y,z)=1,   \\
	\hspace{0.2cm} 0 \hspace{0.5cm} \text{if} &  \partial(y,z) \neq 1
\end{cases} \qquad (y,z \in X).
$$
Let $M$ denote the subalgebra of $\MX$ generated by $A$. The algebra $M$ is called the \emph{adjacency algebra} of $\Gamma$. Observe that $M$ is commutative.
In order to recall the dual idempotents of $\Gamma$, we fix a vertex $x \in X$ for the rest of this section. Let $\varepsilon=\varepsilon(x)$ denote the eccentricity of $x$, that is, $\varepsilon = \max \{\partial(x,y) \mid y \in X\}$. For $ 0 \le i \le \varepsilon$, let $E_i^*=E_i^*(x)$ denote the diagonal matrix in $\MX$ with $(y,y)$-entry given by
\begin{eqnarray*}
	\label{den0}
	(\e_i)_{y y} = \left\{ \begin{array}{lll}
		1 & \hbox{if } \; \partial(x,y)=i, \\
		0 & \hbox{if } \; \partial(x,y) \ne i \end{array} \right. 
	\qquad (y \in X).
\end{eqnarray*}
We call $\e_i$ the \emph{$i$-th dual idempotent} of $\Gamma$ with respect to $x$ \cite[p.~378]{Terpart1}. We observe 
(ei)  $\sum_{i=0}^\varepsilon E_i^*=I$; 
(eii) $\overline{E_i^*} = E_i^*$ $(0 \le i \le \varepsilon)$; 
(eiii) $E_i^{*\top} = E_i^*$ $(0 \le i \le \varepsilon)$; 
(eiv) $E_i^*E_j^* = \delta_{ij}E_i^* $ $(0 \le i,j \le \varepsilon)$,
where $I$ denotes the identity matrix of $\MX$. It follows that $\{E_i^*\}_{i=0}^\varepsilon$ is a basis for a commutative subalgebra $M^*=M^*(x)$ of $\MX$. The algebra $M^*$ is called the \emph{dual adjacency algebra} of $\Gamma$ with respect to $x$ \cite[p.~378]{Terpart1}. Note that for $0 \le i \le \varepsilon$ we have
$\e_i V = {\rm span} \{ \widehat{y} \mid y \in X, \partial(x,y)=i\}$, 
and  
\begin{equation}
	\label{vsub}
	V = E_0^*V + E_1^*V + \cdots + E_\varepsilon^*V \qquad \qquad {\rm (orthogonal\ direct\ sum}). \nonumber 
\end{equation}
The subspace $\e_i V$ is known as the \emph{$i$-th subconstituent of $\Gamma$ with respect to $x$}. 
For convenience, $\e_{i}$ is assumed to be the zero matrix of $\MX$ for $i<0$ and $i>\varepsilon$.

The \emph{Terwilliger algebra of $\Gamma$ with respect to $x$}, denoted by $T=T(x)$,  is the  subalgebra of $\MX$ generated by $M$ and $M^*$ \cite{Terpart1}. Observe that $T$ is generated by the adjacency matrix $A$ and the dual idempotents $E^*_i \, (0\leq i\leq \varepsilon)$, and so it is finite-dimensional. In addition, $T$ is closed under the conjugate-transpose map by construction, and is hence semi-simple. For a vector subspace $W \subseteq V$, we denote by $TW$ the subspace $\{B w \mid B \in T, w \in W\}$. Let us now recall the lowering, the flat, and the raising matrices of $\Ga$. 

\begin{definition} \label{def2} 
	Let $\Gamma=(X,\R)$ denote a graph with diameter $D$. Pick $x \in X$, let $\varepsilon=\varepsilon(x)$ denote the eccentricity of $x$ and  $T=T(x)$ denote the Terwilliger algebra of $\Gamma$ with respect to $x$. Define $L=L(x)$, $F=F(x)$, and $R=R(x)$ in $\MX$ by
	\begin{eqnarray}\label{defLR}
		L=\sum_{i=1}^{\varepsilon}E^*_{i-1}AE^*_i, \hspace{1cm}
		F=\sum_{i=0}^{\varepsilon}E^*_{i}AE^*_i, \hspace{1cm}
		R=\sum_{i=0}^{\varepsilon-1}E^*_{i+1}AE^*_i. \nonumber 
	\end{eqnarray}
	We refer to $L$, $F$, and $R$ as the \emph{lowering}, the \emph{flat}, and the \emph{raising matrix of} $\Ga$ \emph{with respect to $x$}, respectively.
\end{definition}
Note that, by definition, $L, F, R \in T$, $F=F^{\top}$, $R=L^{\top}$, and $A=L+F+R$.
Observe that for $y,z \in X$, the $(z,y)$-entry of $L$ equals $1$ if $\partial(z,y)=1$ and $\partial(x,z)= \partial(x,y)-1$, and $0$ otherwise. The $(z,y)$-entry of  $F$ is equal to $1$ if $\partial(z,y)=1$ and $\partial(x,z)= \partial(x,y)$, and $0$ otherwise. Similarly, the $(z,y)$-entry of $R$ equals $1$ if $\partial(z,y)=1$ and $\partial(x,z)= \partial(x,y)+1$, and $0$ otherwise. Consequently, for $v \in \e_i V \; (0 \le i \le \varepsilon)$ we have
\begin{equation}
	\label{eq:LRaction}
	L v \in \e_{i-1} V, \qquad  F v \in \e_{i} V, \qquad R v \in \e_{i+1} V.
\end{equation}
Observe also that $\Gamma$ is bipartite if and only if $F=0$.%

Recall that a \emph{$T$-module} is a subspace $W$ of $V$ such that $TW \subseteq W$. Let $W$ denote a $T$-module. Then, $W$ is said to be {\em irreducible} whenever $W$ is nonzero and $W$ contains no submodules other than $0$ and $W$. Since the algebra $T$ is semi-simple, any $T$-module is an orthogonal direct sum of irreducible $T$-modules.

Let $W$ denote an irreducible $T$-module. Observe that $W$ is an orthogonal direct sum of the non-vanishing subspaces $E_i^*W$ for $0 \leq i \leq \varepsilon$. The \emph{endpoint} of $W$ is defined as $r:=r(W)=\min \{i \mid 0 \le i\le \varepsilon, \; \e_i W \ne 0 \}$, and the \emph{diameter} of $W$ as $d:=d(W)=\left|\{i \mid 0 \le i\le \varepsilon, \; \e_i W \ne 0 \} \right|-1 $. It turns out that $\e_iW \neq 0$ if and only if $r \leq i \leq r+d$ $(0 \leq i \leq \varepsilon)$. The module $W$ is said to be \emph{thin} whenever $\dim(E^*_iW)\leq1$ for $0 \leq i \leq \varepsilon$. We say that two $T$-modules $W$ and $W^{\prime}$ are \emph{$T$-isomorphic} (or simply \emph{isomorphic}) whenever there exists a vector space isomorphism $\sigma: W \rightarrow W^{\prime}$ such that $\left( \sigma B - B\sigma \right) W=0$ for all $B \in T$. Note that isomorphic irreducible $T$-modules have the same endpoint and the same diameter. It turns out that $T\widehat{x}=\{B\widehat{x} \; | \; B \in T\}$ is the unique irreducible $T$-module with endpoint $0$. We refer to $T\widehat{x}$ as the {\em trivial $T$-module}.


\section{Graphs that support a uniform structure}
\label{sec:uniform}

In this section, we discuss the uniform property of bipartite graphs. The uniform property was first defined for graded partially ordered sets \cite{OldTer}. The definition was later extended to bipartite distance-regular graphs in \cite{MikTer} and then to an arbitrary bipartite graph in \cite{FMMM}. For ease of reference we recall the definition. Let $\Gamma=(X, \mathcal{R})$ denote a bipartite graph and let $V$ denote the standard module of $\Ga$. Fix $x\in X$, and let $\varepsilon=\varepsilon(x)$ denote the eccentricity of $x$. Let $T=T(x)$, $L$, and $R$ denote the corresponding Terwilliger algebra, lowering, and raising matrix, respectively.
\begin{definition}\label{3diag}
	 A \emph{parameter matrix} $U=(e_{ij})_{1\leq i,j\leq \varepsilon}$ is defined to be a tridiagonal matrix with entries in $\CC$, satisfying the following properties:
	\begin{enumerate}[label=(\roman*)]
		\item $e_{ii}=1$ $(1\leq i\leq \varepsilon)$,
		\item $e_{i, i-1}\neq0$ for $2 \le i \le \varepsilon$ or $e_{i-1,  i}\neq0$ for $2\leq i\leq \varepsilon$, and
		\item  the principal submatrix $(e_{ij})_{s\leq i, \, j\leq t}$ is nonsingular for $1\leq s\leq t\leq \varepsilon$.
	\end{enumerate}
	For convenience, we write $e^{-}_i:=e_{i, i-1}$ for $2\leq i\leq \varepsilon$ and $e^{+}_i:=e_{i, i+1}$ for $1\leq i\leq \varepsilon-1$. We also define $e^{-}_1:=0$ and $e^{+}_\varepsilon:=0$. 
\end{definition}

A \emph{uniform structure} of $\Gamma$ with respect to $x$ is a  pair $(U,f)$ where $f=\{f_i\}_{i=1}^\varepsilon$ is a vector in $\CC^\varepsilon$, such that
\begin{align}\label{uniformeq}
	e^{-}_iRL^2+LRL+ e^+_i L^2R=f_iL
\end{align}
 is satisfied on $E^*_iV$ for $1\leq i\leq \varepsilon$, where $\e_i\in T$ are the dual idempotents of $\Gamma$ with respect to $x$. If the vertex $x$ is clear from the context, we will simply use \emph{uniform structure} of $\Gamma$ instead of  \emph{uniform structure of $\Gamma$ with respect to $x$}.  The following result by Terwilliger plays an important role in the rest of this paper. 

\begin{theorem}[{\cite[Theorem 2.5]{OldTer}}]\label{oldpaper}
Let $\Gamma=(X, \mathcal{R})$ denote a bipartite graph and fix $x\in X$. Let $T=T(x)$ denote the corresponding Terwilliger algebra. Assume that $\Gamma$ admits a uniform structure with respect to $x$. Then, the following (i), (ii) hold:
\begin{enumerate}[label=(\roman*),]
\item Every irreducible $T$-module is thin. 
\item Let $W$ denote an irreducible $T$-module with endpoint $r$ and diameter $d$. Then, the isomorphism class of $W$ is determined by $r$ and $d$. 
\end{enumerate}
\end{theorem}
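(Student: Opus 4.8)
The plan is to fix a single irreducible $T$-module and prove both statements for it; since $T$ is semisimple, $V$ is an orthogonal direct sum of irreducible $T$-modules, so this loses nothing. Thus let $W$ be an irreducible $T$-module with endpoint $r$ and diameter $d$, and write $W_i:=E^*_iW$, so that $W_i\neq 0$ precisely for $r\le i\le r+d$. By \eqref{eq:LRaction} the lowering matrix $L$ restricts to maps $W_i\to W_{i-1}$ and the raising matrix $R=L^{\top}$ restricts to maps $W_i\to W_{i+1}$; in particular $L$ annihilates $W_r$ and $R$ annihilates $W_{r+d}$. Since $\Gamma$ is bipartite we have $A=L+R$, so $T$ is generated by $L$, $R$, and the dual idempotents, and $W$ is generated as a $T$-module by any nonzero vector of $W_r$.

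The central step is to use the uniform relation \eqref{uniformeq} to show that the self-adjoint, positive semidefinite operators $LR=LL^{\top}$ and $RL=L^{\top}L$ act as scalar multiples of the identity on each subconstituent $W_i$. Writing \eqref{uniformeq} on $W_i$ and using $R=L^{\top}$ to regroup the cubic terms $RL^2,\,LRL,\,L^2R$ as $RL$ and $LR$ acting on consecutive subconstituents, one obtains for each $i$ an operator identity $W_i\to W_{i-1}$ coupling $LR|_{W_i}$, $LR|_{W_{i-1}}$ and $RL|_{W_{i-1}}$ through $L|_{W_i}$ and the scalars $e^{\pm}_i,f_i$. Starting from the bottom, where $RL|_{W_r}=0$, I would run an induction up the subconstituents, using at each stage that $LR|_{W_i}$ and $RL|_{W_i}$ are self-adjoint together with the nonsingularity built into the parameter matrix $U$, to force each of these operators to be scalar on the relevant $W_i$. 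I expect this to be the main obstacle: the uniform relation is a single cubic identity, and upgrading it to scalar action on every $W_i$ requires carefully coupling the relation across adjacent subconstituents and invoking self-adjointness to exclude nontrivial joint invariant subspaces; the endpoint, where one must separately deduce $\dim W_r=1$, needs extra care.

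Granting the scalar action, both conclusions follow quickly. Pick $0\neq w\in W_r$ and consider $\mathrm{span}\{w,Rw,R^2w,\dots\}$. This subspace is closed under $R$ (the powers $R^iw$ lie in $W_{r+i}$ and vanish once $i>d$), under each $E^*_j$ (it is graded by the subconstituents), and under $L$: indeed, if $LR$ acts as the scalar $\mu_{r+i-1}$ on $W_{r+i-1}$, then $L R^iw=LR\bigl(R^{i-1}w\bigr)=\mu_{r+i-1}R^{i-1}w$ lies in the span. Hence it is a nonzero $T$-submodule of $W$, so by irreducibility it equals $W$; since $W_{r+i}\neq 0$ for $0\le i\le d$, this forces $\dim W_i\le 1$ for all $i$, proving that $W$ is thin and, in particular, $\dim W_r=1$. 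This gives (i). For (ii), the scalars $c_i$ defined by $L R^iw=c_iR^{i-1}w$ are determined, via the recursion extracted from \eqref{uniformeq} with initial condition $c_0=0$, solely by $r$, $d$, and the fixed data $(U,f)$. Consequently two irreducible $T$-modules $W,W'$ with the same endpoint and diameter carry the same scalars, and the linear map $R^iw\mapsto R^iw'$ sending basis to basis is then a $T$-module isomorphism, yielding the classification in (ii).
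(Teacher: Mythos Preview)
First, note that the paper does not supply its own proof of this statement: it is quoted as \cite[Theorem~2.5]{OldTer} and used as a black box. So there is nothing in the paper to compare your argument against; you are effectively reconstructing Terwilliger's original proof, and your overall architecture is the right one. The deductions you draw \emph{after} the central step are correct: once $LR$ acts as a scalar on each $E^*_jW$, the string $\mathrm{span}\{R^iw\}$ is $L$-stable, irreducibility forces thinness, and the scalars $c_j$ then satisfy the tridiagonal system $e^-_{r+j}c_{j-1}+c_j+e^+_{r+j}c_{j+1}=f_{r+j}$ with $c_0=c_{d+1}=0$, whose coefficient matrix is exactly the principal submatrix $(e_{ij})_{r+1\le i,j\le r+d}$ of $U$, nonsingular by Definition~\ref{3diag}(iii). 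That part is fine.

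The genuine gap is the central step itself, which you explicitly leave open. Writing $P_j=(LR)|_{W_j}$ and $Q_j=(RL)|_{W_j}$, relation \eqref{uniformeq} on $W_i$ reads $(e^-_iQ_{i-1}+P_{i-1}-f_iI)\,L|_{W_i}+e^+_i\,L|_{W_i}\,P_i=0$. Even if $P_{i-1},Q_{i-1}$ are already scalar, this only gives $L|_{W_i}(P_i-cI)=0$, so $P_i$ is determined \emph{modulo} $\ker(L|_{W_i})$. Transposing \eqref{uniformeq} via $R=L^\top$ yields a companion relation on $W_{i-1}$ forcing $P_i=cI$ on $\mathrm{im}(R|_{W_{i-1}})=(\ker L|_{W_i})^\perp$, but neither relation controls $P_i$ on $\ker(L|_{W_i})$, and you have no a priori reason this kernel vanishes. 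Closing the induction requires a more delicate two-sided argument that simultaneously establishes injectivity of each $L|_{W_i}$ for $r<i\le r+d$; it genuinely uses the nonsingularity of \emph{all} the principal submatrices of $U$ (not just the one of size $d$) and needs separate handling of indices with $e^+_i=0$, which Definition~\ref{3diag} permits. Your phrase ``self-adjointness together with the nonsingularity built into $U$'' points at the right ingredients, but as written the induction does not close; this is indeed the main obstacle, and it is not overcome in your sketch.
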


Assume now that $\Ga$ is a non-bipartite graph, and fix a vertex $x$ of $\Ga$. In what follows we define what it means for $\Gamma$ to support a uniform structure with respect to $x$ (see also \cite{FMMM}). To this end, we first define a certain graph $\Ga_f$.

\begin{definition}\label{def3.1}
	Let $\Gamma=(X,\mathcal{R})$ denote a non-bipartite graph. Fix $x \in X$ and define $\mathcal{R}_f = \mathcal{R} \setminus \{yz \mid \partial(x,y) = \partial(x,z)\}$. We define ${\Gamma_f}={\Gamma_f(x)}$ to be the graph with vertex set $X$ and edge set $\mathcal{R}_f$. We observe that $\Gamma_f$ is bipartite and connected. We say that $\Ga$ {\em supports a uniform structure with respect to} $x$, if $\Ga_f$ admits a uniform structure with respect to $x$. If the vertex $x$ is clear from the context, we will simply say that $\Ga$ \emph{supports a uniform structure} instead of \emph{supports a uniform structure with respect to $x$}. 
\end{definition}

With reference to Definition \ref{def3.1}, let $\varepsilon=\varepsilon(x)$ denote the eccentricity of $x$ and let $V$ denote the standard module for $\Ga$. Since the vertex set of $\Ga$ is equal to the vertex set of $\Ga_f$, observe that $V$ is also the standard module for $\Ga_f$. Let $T=T(x)$ be the Terwilliger algebra of $\Ga$. Recall that $T$ is generated by the adjacency matrix $A$ and the dual idempotents $\e_i$ ($0\leq i \leq \varepsilon$). Furthermore, we have $A=L+F+R$ where $L,F$, and $R$ are the corresponding lowering, flat, and raising matrices, respectively. Let $T_f=T_f(x)$ be the Terwilliger algebra of $\Ga_f$. As $\Ga_f$ is bipartite, the flat matrix of $\Ga_f$ with respect to $x$ is equal to the zero matrix. Moreover, the lowering and the raising matrices of $\Ga_f$ with respect to $x$ are equal to $L$ and $R$, respectively. It follows that for the adjacency matrix $A_f$ of $\Gamma_f$ we have $A_f=L+R$.  For $0 \le i \le \varepsilon$, note also that the $i$-th dual idempotent of $\Ga_f$ with respect to $x$ is equal to $\e_i$. Consequently, the algebra $T_f$ is generated by the matrices $L, R$, and $\e_i$ ($0\leq i \leq \varepsilon$). The following two results were proved in \cite{FMMM} and will be useful in the remaining sections of this paper. 

\begin{lemma}[{\cite[Lemma 3.4]{FMMM}}]\label{tf}
	With the above notation, let $W$ denote a $T$-module. Then,  the following (i), (ii) hold.
	\begin{enumerate}[label=(\roman*)]
		\item $W$ is a $T_f$-module.
		\item  If $W$ is a thin irreducible $T$-module, then $W$ is a thin irreducible $T_f$-module.
	\end{enumerate}
\end{lemma}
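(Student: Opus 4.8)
The plan is to exploit the inclusion of algebras $T_f \subseteq T$. By Definition~\ref{def2} the matrices $L = \sum_i \e_{i-1} A \e_i$ and $R = \sum_i \e_{i+1} A \e_i$ are built from $A$ and the dual idempotents, so $L, R \in T$; together with $\e_i \in T$ $(0 \le i \le \varepsilon)$ this shows that the three generators $L$, $R$, $\e_i$ of $T_f$ all lie in $T$, whence $T_f \subseteq T$. Part~(i) is then immediate: a $T$-module $W$ is invariant under every element of $T$, in particular under every element of $T_f$, so $W$ is a $T_f$-module.

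For part~(ii), thinness transfers for free. Since $\Ga$ and $\Ga_f$ share the same dual idempotents $\e_i$, they induce the same subconstituent decomposition $W = \bigoplus_i \e_i W$, so the condition $\dim(\e_i W) \le 1$ holds for $W$ regarded as a $T_f$-module exactly as it does for $T$. The substance of the claim is therefore the $T_f$-irreducibility of $W$. Because $T_f \subseteq T$, every $T$-submodule is automatically a $T_f$-submodule; but a priori $W$ could acquire new proper nonzero $T_f$-submodules upon passing to the smaller algebra, so irreducibility does not transfer automatically. To rule this out I would show that the two algebras induce the \emph{same} algebra of operators on $W$, i.e. $T|_W = T_f|_W$ inside $\mathrm{End}(W)$; once this is established the two module structures have identical invariant subspaces, and the $T$-irreducibility of $W$ forces its $T_f$-irreducibility.

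The key step, and the only place where thinness is used in an essential way, is to prove that the flat matrix $F$ acts on $W$ as an element of $T_f|_W$. Since $F = \sum_i \e_i A \e_i$ maps $\e_i V$ into $\e_i V$, and $F \in T$ maps $W$ into $W$, it follows that $F$ maps the $i$-th subconstituent $\e_i W = W \cap \e_i V$ into itself. As $W$ is thin, each nonzero $\e_i W$ is one-dimensional, so $F$ restricts to a scalar $\gamma_i$ on it; consequently $F|_W = \sum_i \gamma_i\, \e_i|_W$ is a linear combination of the restrictions $\e_i|_W$, each of which lies in $T_f|_W$, whence $F|_W \in T_f|_W$. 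Finally, from $A = L + F + R$ we obtain $A|_W = L|_W + F|_W + R|_W \in T_f|_W$, and together with $\e_i|_W \in T_f|_W$ this shows that all generators of $T|_W$ lie in $T_f|_W$. Hence $T|_W \subseteq T_f|_W$, and combined with the trivial reverse inclusion we get $T|_W = T_f|_W$, completing the argument.

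I expect the main obstacle to be conceptual rather than computational: one must recognize that shrinking the algebra to $T_f$ can only enlarge the family of submodules, so that the interesting direction of irreducibility requires an argument, and that thinness is precisely the hypothesis which collapses $F$ into the span of the dual idempotents on $W$ and thereby identifies $T|_W$ with $T_f|_W$. No delicate calculation is involved once this identification is in place.
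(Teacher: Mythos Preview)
Your argument is correct. The inclusion $T_f\subseteq T$ is immediate from the fact that the generators $L,R,\e_i$ of $T_f$ all lie in $T$, and this gives (i) at once. For (ii), your observation that thinness forces $F$ to act on each one-dimensional space $\e_iW$ as a scalar, so that $F|_W$ lies in the span of the $\e_i|_W$ and hence in $T_f|_W$, is exactly the right idea; it yields $T|_W=T_f|_W$ and therefore identical lattices of invariant subspaces.

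Note, however, that the present paper does not prove this lemma at all: it is quoted verbatim from \cite[Lemma~3.4]{FMMM}, so there is no ``paper's own proof'' here to compare against. Your proof is the natural one and is essentially what one finds in the cited source; the only minor stylistic difference one sometimes sees is a direct argument (take a nonzero $T_f$-submodule $W'\subseteq W$, use $\e_i\in T_f$ and thinness to get $\e_iW'=\e_iW$ for some $i$, then use $L,R\in T_f$ to propagate), but your route via $T|_W=T_f|_W$ is cleaner and avoids having to verify separately that $L$ and $R$ act nontrivially between adjacent subconstituents.
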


\begin{proposition}[{\cite[Proposition 3.6]{FMMM}}]\label{tilde}
With the above notation, let $W$ and $W'$ denote thin irreducible $T$-modules with endpoints $r$ and $r'$ and diameters $d$ and $d'$, respectively. Let $\{w_i\}_{i=0}^d$ be a basis for $W$ with $w_i \in \e_{r+i}W$. Similarly, let $\{w'_i\}_{i=0}^{d'}$ be a basis for $W'$ with $w'_i \in \e_{r'+i}W'$. Let $\beta_i \, (1\leq i \leq d)$ and $\gamma_i \, (0\leq i \leq d-1)$ be scalars such that $Lw_i=\beta_i w_{i-1}$ and $Rw_i=\gamma_i w_{i+1}$. Similarly, let $\beta'_i \, (1\leq i \leq d')$ and $\gamma'_i \, (0\leq i \leq d'-1)$ be scalars such that $Lw'_i=\beta'_i w'_{i-1}$ and $Rw'_i=\gamma'_i w'_{i+1}$. Then, $W$ and $W'$ are isomorphic as $T_f$-modules if and only if $r=r'$, $d=d'$, and $\beta_{i+1}/\beta'_{i+1}=\gamma'_i/\gamma_i$ for $0 \leq i \leq d-1$.
\end{proposition}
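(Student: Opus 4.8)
The plan is to verify that a linear bijection between $W$ and $W'$ is a $T_f$-isomorphism exactly when it respects the generators of $T_f$. Recall from Section~\ref{sec:uniform} that $T_f$ is generated by $L$, $R$, and the dual idempotents $\e_i$ $(0 \le i \le \varepsilon)$; hence a vector space isomorphism $\sigma\colon W \to W'$ is a $T_f$-isomorphism if and only if $\sigma$ commutes with $L$, with $R$, and with every $\e_i$. By Lemma~\ref{tf}(ii), both $W$ and $W'$ are thin irreducible $T_f$-modules, so the nonzero spaces $\e_{r+i}W$ and $\e_{r'+i}W'$ are one-dimensional and the given bases $\{w_i\}$, $\{w'_i\}$ diagonalize the idempotents. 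This reduces the whole statement to bookkeeping on these bases.

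For the forward implication, suppose $\sigma\colon W \to W'$ is a $T_f$-isomorphism. Commuting with each $\e_i$ forces $\sigma(\e_i W)=\e_i W'$, so $\e_i W \neq 0$ precisely when $\e_i W' \neq 0$; this yields $r=r'$ and $d=d'$. Thinness then gives scalars $\sigma_i \neq 0$ with $\sigma(w_i)=\sigma_i w'_i$ $(0 \le i \le d)$, since $\sigma(w_i)=\e_{r+i}\sigma(w_i)\in\spanned\{w'_i\}$. Writing $\sigma L = L\sigma$ on $w_i$ produces $\beta_i\sigma_{i-1}=\beta'_i\sigma_i$, while $\sigma R = R\sigma$ on $w_i$ produces $\gamma_i\sigma_{i+1}=\gamma'_i\sigma_i$. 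The first relation (taken at index $i+1$) and the second both compute $\sigma_{i+1}/\sigma_i$, so equating them gives $\beta_{i+1}/\beta'_{i+1}=\gamma'_i/\gamma_i$ for $0 \le i \le d-1$, as required.

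For the converse, assume $r=r'$, $d=d'$, and the ratio condition. The plan is to define $\sigma$ by $\sigma(w_i)=\sigma_i w'_i$ with $\sigma_i:=\prod_{j=1}^{i}(\beta_j/\beta'_j)$ (and $\sigma_0=1$), and to check the three commutation relations on the basis $\{w_i\}$. The $\e_i$-relation follows at once from $r=r'$; the $L$-relation $\beta_i\sigma_{i-1}=\beta'_i\sigma_i$ holds by the very definition of $\sigma_i$; and the $R$-relation $\gamma_i\sigma_{i+1}=\gamma'_i\sigma_i$ reduces, via the hypothesis $\beta_{i+1}/\beta'_{i+1}=\gamma'_i/\gamma_i$, to the already-verified $L$-relation. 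The boundary identities $Lw_0=0$ and $Rw_d=0$ (and likewise in $W'$) are immediate from the endpoint and diameter bounds, so no extra conditions arise at the ends.

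The one point needing care---and the main obstacle---is ensuring that the scalars $\sigma_i$ are well defined and nonzero, which requires $\beta_j \neq 0$ for $1 \le j \le d$ and $\gamma_j \neq 0$ for $0 \le j \le d-1$ in both modules. I would establish this from irreducibility together with the bipartite structure of $\Ga_f$: since $R=L^{\top}$, the matrices $L$ and $R$ are mutually adjoint for $\langle\,\cdot\,,\cdot\,\rangle$, which yields $\beta_i\|w_{i-1}\|^2=\overline{\gamma_{i-1}}\,\|w_i\|^2$ and hence $\beta_i=0 \iff \gamma_{i-1}=0$. If some $\beta_i$ vanished, then $\spanned\{w_0,\dots,w_{i-1}\}$ would be a proper nonzero $T_f$-submodule---it is $\e_j$-invariant, closed under $L$, and closed under $R$ because $Rw_{i-1}=\gamma_{i-1}w_i=0$---contradicting irreducibility. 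This guarantees all $\sigma_i \neq 0$, so $\sigma$ is a bijection and therefore the desired $T_f$-isomorphism.
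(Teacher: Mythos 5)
Your argument is correct and follows essentially the same route as the proof of this statement in \cite[Proposition 3.6]{FMMM} (the present paper only cites the result): a $T_f$-isomorphism must commute with the dual idempotents and hence act diagonally on the displayed bases, the intertwining relations with $L$ and $R$ give the recursions $\beta_i\sigma_{i-1}=\beta'_i\sigma_i$ and $\gamma_i\sigma_{i+1}=\gamma'_i\sigma_i$, whence the ratio condition, and conversely $\sigma_i=\prod_{j=1}^{i}\beta_j/\beta'_j$ defines the intertwiner on generators of $T_f$. Your care in showing all $\beta_i,\gamma_i$ are nonzero (via the adjointness $R=L^{\top}$ and irreducibility of $W$ as a $T_f$-module, guaranteed by Lemma~\ref{tf}) is exactly the point needed to make the ratios well defined, so there is no gap.
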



\section{Distance-regular graphs and strongly regular graphs}
\label{sec:drg}

In this section, we review some definitions and basic concepts regarding distance-regular graphs and strongly regular graphs. Let $\Gamma=(X, \R)$ denote a graph with diameter $D$. Recall that for $x,y \in X$ the \emph{distance} between $x$ and $y$, denoted by $\partial(x,y)$, is the length of a shortest $xy$-path. 
For an integer $i$, we define $\Gamma_i(x)$ by 
$$\Gamma_i(x)=\left\lbrace y \in X \mid \partial(x, y)=i\right\rbrace. 
$$
In particular,  $\Gamma(x)=\Gamma_1(x)$ is the set of neighbors of $x$. The subgraph of $\Ga$ induced on $\Ga(x)$ will be called the {\em local graph of} $\Gamma$ {\em with respect to} $x$ and denoted by $\Delta(x)$.

For an integer $k \geq 0$, we say that $\Gamma$ is \emph{regular} with valency $k$ whenever $|\Gamma(x)| = k$ for all $x\in X$. We say  that $\Gamma$ is \emph{distance-regular} whenever, for all integers $0 \leq h, i, j \leq D$ and all $x,y \in X$ with $\partial(x, y) = h$, the number $p^h_{ij}:= |\Gamma_{i}(x) \cap \Gamma_{j} (y)|$ is independent of the choice of $x,y$. The constants $p^h_{ij}$ are known as the \emph{intersection numbers} of $\Gamma$. For convenience, set $c_i := p^i _{1 \, i-1}  \; (1 \leq i \leq D)$, $a_i := p^i_{ 1i} \; (0 \leq i \leq D)$, $b_i := p^i_{ 1 \, i+1} \; (0 \leq i \leq D - 1)$,  $k_i := p^0_{ii} \; (0 \leq i \leq D)$, and $c_0 := 0, b_D := 0$.  Note that in a distance-regular graph, the eccentricity of every vertex is equal to $D$. 
For the rest of this section assume that $\Gamma$ is distance-regular. By the triangle inequality, for $0 \leq h, i, j \leq D$ we have $p^h_{ij}= 0$ (resp., $p^h_{ij}\neq 0$) whenever one of $h, i, j$ is greater than (resp., equal to) the sum of the other two. In particular, $c_i \neq 0 $ for $ 1 \leq i \leq D$ and $b_i \neq 0$ for $0 \leq i \leq D - 1$. Observe that $\Gamma$ is regular with valency $k = b_0 = k_1$ and $c_i +a_i +b_i = k $ for $ 0 \leq i \leq D$. Moreover, $\Ga$ is bipartite if and only if $a_i=0$ for $0 \le i \le D$.

Next, we recall the notion of near polygons, distance-regular graphs with classical parameters, and also strongly regular graphs. 

A distance-regular graph $\Ga$ is called a {\em near polygon} whenever $a_i=a_1 c_i$ for $1 \le i \le D-1$ and $\Ga$ does not contain $K_{1,1,2}$ as an induced subgraph \cite{No}. Here $K_{1,1,2}$ denotes the complete multipartite graph with three parts of size $1$, $1$, and $2$, respectively. 

The graph $\Ga$ is said to have {\it classical parameters} $(D,q,\alpha,\beta)$ 
whenever the intersection numbers of $\Ga$ satisfy
$$
  c_i = {i \brack 1} \Big( 1+\alpha {i-1 \brack 1} \Big) \qquad \qquad (1 \le i \le D), 
$$
$$
  b_i = \Big( {D \brack 1} - {i \brack 1} \Big) 
        \Big( \beta-\alpha {i \brack 1} \Big) \qquad \qquad (0 \le i \le D-1), 
$$
where
$$
  {j \brack 1} := 1+q+q^2+ \cdots + q^{j-1}. 
$$
In this case $q$ is an integer and $q \not \in \{0,-1\}$, see \cite[Proposition 6.2.1]{BCN}. 

 A graph $\Delta$ is \textit{strongly regular} with parameters $(n,k,\lambda,\mu)$ if it has vertex set of size $n$, it is regular of valency $k$, every two adjacent vertices have $\lambda$ common neighbors, and every two non-adjacent vertices have $\mu$ common neighbors. It is well-known that a connected regular graph is strongly regular if and only if it has exactly three different eigenvalues. Note that  if a strongly regular graph $\Delta$ is connected, then it is distance-regular with diameter $2$.  If $\Delta$ is not connected, then it is a disjoint union of cliques of the same size. We refer to \cite[Section 1.3]{BCN} for some well-known properties of strongly regular graphs.


\section{ Local eigenvalues - part I} 
\label{sec:locI}

Let $\Ga=(X,\mathcal{R})$ denote a non-bipartite distance-regular graph with diameter $D \ge 4$ and intersection numbers $b_i \, (0 \le i \le D-1)$, $c_i \, (1 \le i \le D)$. In this section, we summarize certain results of Go and Terwilliger \cite{GoTer, Ter} about thin irreducible $T$-modules with endpoint $1$. Fix $x \in X$ and recall the local graph $\Delta= \Delta(x)$.  Note that $\Delta$ has $k$ vertices and is regular with valency $a_1$.  The adjacency matrix $\tA$ of $\Delta$ is symmetric with real entries, and thus $\tA$ is diagonalizable with real eigenvalues.
We let $\eta_1, \eta_2 , \ldots , \eta_{k}$ denote the eigenvalues of $\tA$.  We call 
$\eta_1, \eta_2 , \ldots , \eta_{k}$ the {\em local eigenvalues of $\Gamma$ with respect to $x$}.  

\noindent Let $T=T(x)$ be the Terwilliger algebra of $\Ga$ and $\e_i \, (0 \le i \le D)$ be the dual idempotents of $\Ga$ with respect to $x$. We now consider the
first subconstituent $E^*_1V$, where $V$ is the standard module for $\Ga$. It is not hard to see that
the dimension of $E^*_1V$ is $k$ and $E^*_1V$ is invariant under the action of $E^*_1AE^*_1$.
We note that, for an appropriate ordering of the vertices of $\Gamma$, we have 
$$ E^*_1AE^*_1 = \left(\begin{array}{cc} \tA & 0 \\ 0 & 0 \end{array} \right).$$
Hence, the action of $E^*_1AE^*_1$ on $E^*_1V$ is essentially the adjacency map
for $\Delta$.
In particular, the action of $E^*_1AE^*_1$ on $E^*_1V$ is diagonalizable with eigenvalues $\eta_1, \eta_2, \ldots, \eta_{k}$.  We observe that the vector $s_{1} = \sum_{y \in \Ga(x)} \widehat{y}$ is contained in $E_{1}^{*}V$.  
One may easily show that $s_{1}$ is an eigenvector for $E^*_1AE^*_1$ 
with eigenvalue $a_1$.  Reordering the eigenvalues if necessary, we have $\eta_1 = a_1$.  For the rest of this paper, we assume that the local eigenvalues are ordered in this way.  
Now consider the orthogonal complement of $s_1$ in $E_1^*V$.  It is easy to see that this space is invariant under the action of $E^*_1AE^*_1$.  Thus, the restriction of the matrix $E^*_1AE^*_1$ to this space is diagonalizable with eigenvalues $\eta_{2}, \eta_{3},\ldots, \eta_{k}$.

\begin{definition}
	\label{TM}  \rm
	With the above notation, let $W$ denote a thin irreducible $T$-module with endpoint 1. Observe that $E^*_1W$ is a $1$-dimensional eigenspace for $E^*_1AE^*_1$, and let $\eta $ denote the corresponding eigenvalue. Since $E^*_1W$ is contained in $E^*_1V$, the eigenvalue $\eta $ is one of $\eta_{2}, \eta_{3},\ldots, \eta_{k}$. We refer to $\eta $ as the {\em local eigenvalue} of $W$.
\end{definition}

\begin{theorem}[{\cite[Theorem 12.1]{Ter}}]\label{iso}
	With the above notation, let $W$ denote a thin irreducible $T$-module with endpoint $1$ and local eigenvalue $\eta$, and $W'$ denote an irreducible $T$-module. Then, $W$ and $W'$ are isomorphic as $T$-modules if and only if  $W'$ is thin with endpoint $1$ and local eigenvalue $\eta$. Consequently, the isomorphism class of  $W$ is determined by $\eta$.
\end{theorem}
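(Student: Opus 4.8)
The statement contains two implications, and the plan is to treat them separately. The forward (``only if'') direction is routine. If $\sigma\colon W\to W'$ is a $T$-module isomorphism, then since each $\e_i$ lies in $T$ we have $\sigma \e_i=\e_i\sigma$, so $\sigma(\e_iW)=\e_iW'$ and hence $\dim \e_iW=\dim \e_iW'$ for all $i$. This immediately forces $W'$ to have endpoint $1$, to have the same diameter as $W$, and to be thin. Moreover $\sigma$ commutes with $\e_1 A\e_1$, so applying it to a nonzero $w_0\in \e_1W$ (on which $\e_1A\e_1$ acts as $\eta$ by Definition~\ref{TM}) shows that $\sigma w_0\in \e_1W'$ is an $\eta$-eigenvector of $\e_1A\e_1$, i.e.\ $W'$ has local eigenvalue $\eta$. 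The final sentence of the statement then follows from the converse.

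For the converse I would first reduce to a numerical statement. Fix nonzero $w_i\in \e_{1+i}W$ $(0\le i\le d)$ and $w_i'\in \e_{1+i}W'$ $(0\le i\le d')$, each space being $1$-dimensional by thinness, and write $Lw_i=\beta_iw_{i-1}$, $Fw_i=\alpha_iw_i$, $Rw_i=\gamma_iw_{i+1}$, and analogously for $W'$. Arguing exactly as in the proof of Proposition~\ref{tilde}, but now also tracking the flat matrix $F$, one checks that $W$ and $W'$ are $T$-isomorphic if and only if $d=d'$, $\alpha_i=\alpha_i'$ for all $i$, and $\beta_{i+1}\gamma_i=\beta_{i+1}'\gamma_i'$ for all $i$: the only freedom in a $T$-isomorphism is a rescaling $w_i\mapsto\lambda_iw_i'$, which preserves each $\alpha_i$ and each product $\beta_{i+1}\gamma_i$ but nothing finer. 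Thus it suffices to prove that the diameter $d$, the flat scalars $\alpha_i$, and the products $\vartheta_i:=\beta_{i+1}\gamma_i$ are determined by $\eta$ and the intersection numbers of $\Ga$.

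The base of this determination is clean. Since $W$ has endpoint $1$ we have $Lw_0=0$, and $Fw_0=\eta w_0$ gives $\alpha_0=\eta$. Because $W$ is irreducible and non-isomorphic to the trivial module $T\widehat{x}$, the standard $*$-algebra argument shows $W\perp T\widehat{x}$; in particular $w_0$ is orthogonal to $s_1=\sum_{y\in\Ga(x)}\widehat{y}$, which spans $\e_1(T\widehat{x})$, so the all-ones matrix on the first subconstituent annihilates $w_0$. Using $A=L+F+R$, the identity $A^2=kI+a_1A+c_2A_2$, and the fact that on $\e_1V$ the distance-two matrix acts as $J-I-\tA$, one extracts the grade-preserving part on $w_0$ and obtains
\begin{equation*}
\vartheta_0=\beta_1\gamma_0=k-c_2+(a_1-c_2)\eta-\eta^2 .
\end{equation*}
Hence $\alpha_0$ and $\vartheta_0$ are already pinned down by $\eta$.

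The inductive step, propagating from $(\alpha_i,\vartheta_i)$ to $(\alpha_{i+1},\vartheta_{i+1})$, is where the real work lies, and I expect it to be the main obstacle. The difficulty is that the grade-preserving part of $A^2=kI+a_1A+c_2A_2$ at level $1+i$ introduces the unknown diagonal action of $A_2$ on $w_i$, and every further adjacency-algebra identity introduces a new unknown action of a distance matrix; thus the relations coming from $M=\langle A\rangle$ alone do not close the recurrence. To close it one must invoke genuinely Terwilliger-algebra (combinatorial) relations that are \emph{not} consequences of $A_i\in M$, namely the triangle-inequality vanishing $\e_jA_i\e_l=0$ whenever $i>j+l$ (for instance $\e_1A_i\e_1=0$ for $i\ge3$, since two neighbours of $x$ lie at distance at most $2$), applied to the tridiagonal action of $A$ on the thin module $W$. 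This is precisely the content of the local-eigenvalue analysis of Go and Terwilliger \cite{GoTer,Ter}, which I would use to express each $\alpha_i$ and each $\vartheta_i$ as a fixed rational function of $\eta$ and the intersection numbers, and to identify $d$ as the first index at which the raising action is forced to vanish. Once $d$, $\{\alpha_i\}$, and $\{\vartheta_i\}$ are seen to depend only on $\eta$, the modules $W$ and $W'$ share these data, the rescaling map $w_i\mapsto\lambda_iw_i'$ is a $T$-isomorphism, and the proof is complete.
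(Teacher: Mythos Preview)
The paper does not prove this theorem: it is quoted verbatim from \cite[Theorem~12.1]{Ter} and no proof is supplied. So there is no ``paper's own proof'' to compare against.

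Your forward direction is correct and complete. Your reduction of the converse to showing that $d$, the flat scalars $\alpha_i$, and the products $\vartheta_i=\beta_{i+1}\gamma_i$ depend only on $\eta$ is also correct, and your computation of $\alpha_0=\eta$ and $\vartheta_0=k-c_2+(a_1-c_2)\eta-\eta^2$ is accurate. However, the inductive step is not a proof but a pointer: you explicitly write that the propagation from $(\alpha_i,\vartheta_i)$ to $(\alpha_{i+1},\vartheta_{i+1})$ ``is precisely the content of the local-eigenvalue analysis of Go and Terwilliger \cite{GoTer,Ter}, which I would use''. Since the theorem you are trying to prove \emph{is} Theorem~12.1 of \cite{Ter}, this is circular. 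The actual mechanism in \cite{Ter} is to exhibit, for each admissible $\eta$, an explicit tridiagonal system (closed-form $a_i(W)$, $x_i(W)$ in Terwilliger's notation) satisfying the required relations, and to show uniqueness; the relations $\e_jA_i\e_l=0$ you mention are necessary but are not by themselves sufficient to close a naive recurrence.

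In short: your outline is sound and matches the architecture of the argument in \cite{Ter}, but the substantive step---deriving closed forms for $\alpha_i,\vartheta_i$ as functions of $\eta$---is asserted rather than carried out. Since the present paper treats the result as a black box citation, your sketch is already at the same level of detail as what the paper provides.
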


To further describe local eigenvalues of irreducible $T$-modules with endpoint $1$, we introduce the following notation: for $z\in \mathbb{R}$, we define 
$$ 
\tilde{z}=\begin{cases}
	-1-\frac{b_1}{1+z} &\hbox{ if $z\neq-1$},\\
	\infty &\hbox{ if $z=-1$}.
\end{cases}
$$

Let $\theta_0>\theta_1>\ldots>\theta_D$ denote the eigenvalues of $\Ga$. It follows from \cite[Lemma 2.6]{JKT} that $\theta_1,\theta_D$ are not equal to $ -1$, also $\tilde{\theta_1}<-1$ and  $\tilde{\theta_D}\ge 0$. Let $W$ denote a thin irreducible $T$-module with endpoint 1 and local eigenvalue $\eta$.  By \cite[Theorem 1]{Ter1}, we have that $\tilde{\theta_1}\le \eta \le \tilde{\theta_D}$.

\begin{proposition}[{\cite[Corollary 7.8]{inequ}}]
	\label{prop:loc=loc}
 With the above notation, suppose every irreducible $T$-module with endpoint $1$ is thin. Let $\Phi$  denote the set of distinct scalars among $\eta_2, \eta_3, \ldots, \eta_k$.  Then, for every  $\eta \in \Phi$ there exists a  thin irreducible $T$-module with endpoint $1$  and local eigenvalue $\eta$. Up to isomorphism, there are no further irreducible $T$-modules with endpoint $1$.
\end{proposition}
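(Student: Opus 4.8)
The plan is to decompose the standard module $V$ and read off the conclusion from the first subconstituent $\e_1 V$. Since $T$ is semi-simple, I would write $V$ as an orthogonal direct sum of irreducible $T$-modules. Applying $\e_1$ and using that $\e_1$ is Hermitian and idempotent, this gives $\e_1 V=\bigoplus_W \e_1 W$, again an orthogonal direct sum, the sum being over the irreducible modules $W$ occurring in the decomposition. A module $W$ contributes a nonzero summand $\e_1 W$ precisely when its endpoint is at most $1$. The unique module of endpoint $0$ is the trivial module $T\widehat{x}$, and since $R\widehat{x}=\sum_{y\in\Ga(x)}\widehat{y}=s_1$ with $R\in T$, we have $s_1\in T\widehat{x}$ and hence $\e_1(T\widehat{x})=\spanned(s_1)$, a $1$-dimensional space. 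Every module $W$ of endpoint $\geq 2$ has $\e_1 W=0$ by the definition of endpoint. Thus only the trivial module and the modules of endpoint $1$ contribute to $\e_1 V$.

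First I would record the key orthogonality that isolates $s_1$: if $W$ is an irreducible module of endpoint $1$ in the decomposition, then $W\perp T\widehat{x}$, and since $\e_1 W\subseteq W$ while $\e_1(T\widehat{x})=\spanned(s_1)\subseteq T\widehat{x}$, we obtain $\e_1 W\perp s_1$. Hence $\bigoplus_{r(W)=1}\e_1 W\subseteq s_1^{\perp}\cap \e_1 V$. Under the hypothesis that every irreducible $T$-module with endpoint $1$ is thin, each such summand $\e_1 W$ is $1$-dimensional and is an eigenspace of $\e_1 A\e_1$ whose eigenvalue is, by Definition~\ref{TM}, the local eigenvalue of $W$. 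Counting dimensions, $\dim \e_1 V=k$ together with the single dimension contributed by $T\widehat{x}$ forces the number of endpoint-$1$ modules (with multiplicity) to be exactly $k-1$; since $s_1^{\perp}\cap \e_1 V$ also has dimension $k-1$, the containment above is an equality, and on this space $\e_1 A\e_1$ acts diagonalizably with eigenvalues $\eta_2,\eta_3,\ldots,\eta_k$.

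The matching is then immediate. The block decomposition of $\e_1 A\e_1$ on $s_1^{\perp}\cap \e_1 V$ into the one-dimensional eigenspaces $\e_1 W$ shows that the multiset $\{\eta_2,\ldots,\eta_k\}$ coincides with the multiset of local eigenvalues of the endpoint-$1$ modules. In particular the set $\Phi$ of distinct values equals the set of distinct local eigenvalues that actually occur, so for every $\eta\in\Phi$ there is an endpoint-$1$ module $W$ with local eigenvalue $\eta$, and $W$ is thin by hypothesis; this gives the first assertion. For the second assertion, any irreducible $T$-module $W'$ of endpoint $1$ is thin by hypothesis, so its local eigenvalue lies in $\{\eta_2,\ldots,\eta_k\}$, whose distinct values form $\Phi$, and by Theorem~\ref{iso} its isomorphism class is determined by that eigenvalue. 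Hence, up to isomorphism, there are no endpoint-$1$ modules beyond the ones already produced.

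Structurally the argument is routine once Theorem~\ref{iso} is available; the only delicate point is the bookkeeping that confines the eigenvalue $\eta_1=a_1$ to the trivial module, i.e. verifying that $s_1$ lies entirely in $T\widehat{x}$ and is orthogonal to every $\e_1 W$ with $r(W)=1$. This is precisely what guarantees that the endpoint-$1$ modules account for exactly $\eta_2,\ldots,\eta_k$ rather than $\eta_1$, and it underlies both the existence and the exhaustiveness claims. I would therefore expect the main care to go into the orthogonality and dimension count of the second paragraph, with the remaining steps following directly.
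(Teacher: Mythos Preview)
The paper does not supply its own proof of this proposition; it is quoted from the literature (Corollary~7.8 of \cite{inequ}). Your argument is correct and is essentially the standard proof: decompose $V$ orthogonally into irreducible $T$-modules, apply $E_1^*$, and use thinness together with Theorem~\ref{iso} to match the eigenvalues of $E_1^*AE_1^*$ on $s_1^{\perp}\cap E_1^*V$ against the local eigenvalues of the endpoint-$1$ modules.

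One minor point that could be tightened: when you write ``and hence $E_1^*(T\widehat{x})=\spanned(s_1)$'', the computation $s_1=R\widehat{x}\in T\widehat{x}$ only gives the inclusion $\spanned(s_1)\subseteq E_1^*(T\widehat{x})$. Equality requires that the trivial module be thin (so that $\dim E_1^*(T\widehat{x})=1$), which is a standard fact for distance-regular graphs but is not stated explicitly in this section. Once that is noted, the dimension count and the rest of the argument go through exactly as you describe.
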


\begin{proposition}[{\cite[Theorem 10.3]{GoTer}, \cite[Theorem 11.4]{Ter}}]\label{prop:diam}
	With the above notation,  let $W$ denote a thin irreducible $T$-module with endpoint $1$, diameter $d$, and local eigenvalue $\eta$. Then, the following (i), (ii) hold. 
	\begin{enumerate}[label=(\roman*)]
		\item If $\eta \in \{\tilde{\theta_1}, \tilde{\theta_D}\}$, then $d=D-2$. 
		\item If $\tilde{\theta_1}<\eta< \tilde{\theta_D}$, then $d=D-1$. 
	\end{enumerate}
 \end{proposition}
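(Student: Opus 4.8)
The plan is to realize $W$ explicitly as the span of the iterated raisings of a local eigenvector, and to read off the diameter from where the resulting sequence of squared norms first vanishes. First I would fix a unit vector $w_0\in \e_1 W$ with $\e_1 A\e_1\, w_0=\eta w_0$ and set $w_i=R^i w_0$ for $i\ge 0$. By \eqref{eq:LRaction} we have $w_i\in \e_{1+i}W$, and since $W$ is thin with endpoint $1$ the nonzero $w_i$ form a basis of $W$, so $d=\max\{i\mid w_i\ne 0\}$. Because $\e_j=0$ for $j>D$, we get $w_i=0$ for $i\ge D$ and hence immediately $d\le D-1$. Thus the diameter can never exceed $D-1$, and the entire content of the proposition is to decide, as a function of $\eta$, whether the top level $\e_D W$ (that is, the vector $w_{D-1}$) survives.

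Next I would set up the squared-norm recurrence. Using $R^{\top}=L$ and thinness, for each $i$ we have $LR w_i=\ell_{i+1}w_i$ for a scalar $\ell_{i+1}$, whence $\|w_{i+1}\|^2=\langle w_i,LR w_i\rangle=\ell_{i+1}\|w_i\|^2$ and therefore $\|w_i\|^2=\prod_{j=1}^{i}\ell_j$. Consequently $d$ is the largest $i$ with $\ell_1\cdots\ell_i\ne 0$. The scalars $\ell_i$ are rational functions of $\eta$ and the intersection numbers, and the base step can be computed directly: writing $w_0=\sum_{y\in\Ga(x)}\alpha_y\widehat y$, using $\tA w_0=\eta w_0$ and the orthogonality of $w_0$ to $s_1=\sum_{y\in\Ga(x)}\widehat y$, a short count of the entries of $A^2$ on the first subconstituent (which equal $k$, $a_1$, $c_2$ according as $y=y'$, $y\sim y'$, or $y\not\sim y'$) gives $\langle w_0,A^2 w_0\rangle=(k-c_2)+(a_1-c_2)\eta$; comparing with $\|Aw_0\|^2=\eta^2+\ell_1$ yields
\begin{equation*}
\ell_1=-\eta^2+(a_1-c_2)\eta+k-c_2 .
\end{equation*}
The remaining $\ell_i$ then obey a three-term recurrence coming from the tridiagonal relations of the distance-regular graph, equivalently from the explicit local intersection numbers of thin endpoint-$1$ modules established in \cite{GoTer,Ter}.

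Finally I would analyze the sequence $\{\ell_i\}$ on the interval $\tilde\theta_1\le\eta\le\tilde\theta_D$ guaranteed by \cite{Ter1}. The claim I would establish is that $\ell_1,\dots,\ell_{D-2}$ are strictly positive throughout this interval, while the terminal factor $\ell_{D-1}$, viewed as a function of $\eta$, vanishes precisely at the two endpoints $\eta=\tilde\theta_1$ and $\eta=\tilde\theta_D$ and is positive strictly between them. Granting this, $\|w_{D-1}\|^2=\prod_{j=1}^{D-1}\ell_j$ is zero exactly when $\eta\in\{\tilde\theta_1,\tilde\theta_D\}$, so $w_{D-1}=0$ and $d=D-2$ in the boundary cases, while $w_{D-1}\ne 0$ and $d=D-1$ in the interior, which is exactly (i) and (ii). The same picture can be phrased spectrally: $A$ acts on $W$ as an irreducible tridiagonal matrix whose eigenvalues lie among $\theta_1,\dots,\theta_D$ (the eigenvalue $\theta_0$ is excluded since $W\perp T\widehat x$), so $d=D-1$ forces all of $\theta_1,\dots,\theta_D$ to occur, whereas a boundary value of $\eta$ forces one of the two extreme eigenvalues $\theta_1,\theta_D$ out of the $A$-spectrum of $W$, lowering $d$ by one. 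The hard part will be this terminal claim: solving the $\ell_i$-recurrence in closed form, certifying that none of the lower factors vanish on the interval, and matching the zero locus of $\ell_{D-1}$ exactly to $\{\tilde\theta_1,\tilde\theta_D\}$. This is precisely where the defining property of the map $z\mapsto\tilde z$ enters, through the involutive relation $\tilde{\tilde z}=z$ that ties $\tilde\theta_1,\tilde\theta_D$ back to $\theta_1,\theta_D$, and where the structure theory of \cite{GoTer,Ter} does the real work.
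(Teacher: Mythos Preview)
The paper does not supply its own proof of this proposition; it merely imports the result from \cite[Theorem~10.3]{GoTer} and \cite[Theorem~11.4]{Ter}. Your outline is precisely the strategy those references carry out: raise a local eigenvector through the levels, track the squared norms $\|w_i\|^2=\prod_{j\le i}\ell_j$, and determine the diameter from the first vanishing of this product. Your computation of $\ell_1$ is correct, and the spectral reformulation at the end (that $d=D-1$ forces all of $\theta_1,\ldots,\theta_D$ to occur on $W$, while a boundary $\eta$ drops one extreme eigenvalue) is also how \cite{GoTer,Ter} package the conclusion.

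That said, what you have written is a sketch rather than a proof. You yourself identify the crux---showing that $\ell_1,\ldots,\ell_{D-2}>0$ on $[\tilde\theta_1,\tilde\theta_D]$ and that $\ell_{D-1}$ vanishes exactly at the endpoints---and then defer it to ``the structure theory of \cite{GoTer,Ter}.'' Since the proposition is itself cited from those references, this is circular as a standalone argument. If you want an independent proof, the missing ingredient is a closed form for the $\ell_i$. In \cite{Ter} this is obtained by parametrizing $\eta$ via $\eta=\tilde\lambda$ for some real $\lambda$ (using the involution $\tilde{\tilde z}=z$ you mention) and showing that the $\ell_i$ factor as products of linear terms in $\lambda$ whose roots are exactly $\theta_1$ and $\theta_D$ at level $i=D-1$; without that explicit factorization, the positivity and endpoint-vanishing claims remain assertions.
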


In what follows we will be using the following notation.

\begin{notation}\label{not3.2}
	Let $\Ga=(X,\mathcal{R})$ denote a non-bipartite distance-regular graph with classical parameters $(D,q,\alpha,\beta)$ with $D \ge 4$, $q \ge 2$, intersection numbers $b_i \, (0 \le i \le D-1)$, $c_i \, (1 \le i \le D)$,  and eigenvalues $\theta_0>\theta_1>\ldots>\theta_D$. Let $V$ denote the standard module of $\Ga$. Fix $x\in X$, and let $\Delta=\Delta(x)$ denote the local graph of $\Gamma$ with respect to $x$. Let $T=T(x)$ be the Terwilliger algebra of $\Ga$ and $\e_i \, (0 \le i \le D)$ be the dual idempotents of $\Ga$ with respect to $x$. Let $L$, $F$, and $R$ denote the corresponding lowering, flat, and raising matrix, respectively. Let ${\Gamma_f}={\Gamma_f(x)}$ be as defined in Definition~\ref{def3.1} and $T_f=T_f(x)$ be the Terwilliger algebra of $\Ga_f$. Recall  that $T_f$ is generated by the matrices $L, R$, and $\e_i$ ($0\leq i \leq D$). We assume that every irreducible $T$-module with endpoint $1$ is thin and that $\Ga$ supports a uniform structure with respect to $x$.
\end{notation}


\section{Local eigenvalues - part II}
\label{sec:locII}

With reference to Notation \ref{not3.2}, in this section, we describe local eigenvalues of irreducible $T$-modules with endpoint $1$. 

Let $W$ denote an irreducible $T$-module with endpoint $1$ (recall that $W$ is thin by assumption), and let $\eta$ denote the  local eigenvalue of $W$. According to Theorem \ref{iso}, the isomorphism class of $W$ is determined by $\eta$. Applying \cite[Corollary 4.12]{TerpartII}, we find that

\begin{equation}\label{4eta}
\eta \in \left\lbrace -q -1, \, \, \beta-\alpha - 1, \, \, -1, \, \, \alpha q\frac{q^{D-1}-1}{q-1}-1\right\rbrace.
\end{equation}
Let us refer to these values in \eqref{4eta} as $\eta_1$, $\eta_2$, $\eta_3$, and $\eta_4$, respectively. Note that $\eta_1$, $\eta_2$, and $\eta_3$ are distinct, $\eta_4\ne\eta_1$, $\eta_4=\eta_2$ if and only if $\beta=\alpha\frac{q^{D}-1}{q-1}$, and $\eta_4=\eta_3$ if and only if $\alpha=0$ (see \cite[Section 13]{Ter} for more details). As we proceed, when we mention $\eta_4$ we implicitly assume that $\alpha \ne 0$ and $\beta \ne \alpha\frac{q^{D}-1}{q-1}$. We have that the diameter $d$ of $W$ is $D-2$ if  $\eta\in \{\eta_1,  \,  \eta_2\}$, and $D-1$ if $\eta\in \{\eta_3,  \,  \eta_4\}$, see \cite[Section 13]{Ter}.
This implies that we have at most four thin irreducible $T$-modules with endpoint $1$, up to isomorphism. 

\begin{lemma}
	\label{lem:noniso}
	With reference to Notation~\ref{not3.2}, assume $\alpha \ne 0$. Let $W$ and $W'$ denote irreducible $T$-modules with endpoint $1$. If $W$ and $W'$ are nonisomorphic as $T$-modules, then they are also nonisomorphic as $T_f$-modules. 
\end{lemma}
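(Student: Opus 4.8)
The plan is to apply the isomorphism criterion of Proposition~\ref{tilde}. Since $W$ and $W'$ both have endpoint $1$, the requirement $r=r'$ is automatic, and by Lemma~\ref{tf} both are thin irreducible $T_f$-modules. Writing $p_i:=\beta_{i+1}\gamma_i$ and $p_i':=\beta'_{i+1}\gamma'_i$, the condition $\beta_{i+1}/\beta'_{i+1}=\gamma'_i/\gamma_i$ is equivalent to $p_i=p_i'$; hence a $T_f$-isomorphism forces $d=d'$ together with $p_i=p_i'$ for all $0\le i\le d-1$, and to prove non-isomorphism it suffices to exhibit a single index $i$ at which this fails.

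First I would dispose of the pairs lying in different diameter classes. By the discussion following~\eqref{4eta}, a module has diameter $D-2$ when its local eigenvalue lies in $\{\eta_1,\eta_2\}$ and diameter $D-1$ when it lies in $\{\eta_3,\eta_4\}$. Thus if $\{\eta,\eta'\}$ meets both classes, then $d\neq d'$ and Proposition~\ref{tilde} already rules out a $T_f$-isomorphism. This leaves the two equal-diameter pairs: Case~A, where $\{\eta,\eta'\}=\{\eta_1,\eta_2\}$, and Case~B, where $\{\eta,\eta'\}=\{\eta_3,\eta_4\}$ (here $\alpha\neq0$ guarantees $\eta_4\neq\eta_3$, and $\eta_4$ is genuinely present precisely when $\beta\neq\alpha(q^D-1)/(q-1)$).

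The central computation is a closed form for $p_0=\beta_1\gamma_0$. Let $w_0\in\e_1W$ be a local eigenvector, so $\tA w_0=\eta w_0$, $w_0\perp s_1$, $Lw_0=0$ (endpoint $1$), and $Fw_0=\eta w_0$. From $Aw_0=\eta w_0+\gamma_0 w_1$, projecting $A^2w_0$ back onto $\e_1W$ gives $\e_1A^2w_0=(\eta^2+\beta_1\gamma_0)w_0$. On the other hand, counting walks of length two inside the first subconstituent yields $\e_1A^2\e_1=(k-c_2)I+(a_1-c_2)\tA+c_2J$ on $\e_1V$; applying this to $w_0$ and using $Jw_0=0$ gives $\e_1A^2w_0=\big((k-c_2)+(a_1-c_2)\eta\big)w_0$. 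Comparing the two expressions,
\[
p_0(\eta)=\beta_1\gamma_0=(k-c_2)+(a_1-c_2)\eta-\eta^2 .
\]
Hence for $\eta\neq\eta'$ one has $p_0(\eta)=p_0(\eta')$ if and only if $\eta+\eta'=a_1-c_2$. Expressing the intersection numbers through the classical parameters gives $a_1=\beta+\alpha q\,(q^{D-1}-1)/(q-1)-1$ and $c_2=(1+q)(1+\alpha)$, so in Case~A a short computation yields
\[
\eta_1+\eta_2-(a_1-c_2)=-\alpha q^2\,\frac{q^{D-2}-1}{q-1},
\]
which is nonzero because $\alpha\neq0$, $q\ge2$, and $D\ge4$. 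Thus $p_0(\eta_1)\neq p_0(\eta_2)$, and Proposition~\ref{tilde} gives the claim in Case~A; this is precisely where the hypothesis $\alpha\neq0$ enters.

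It remains to treat Case~B, where the analogous computation gives $\eta_3+\eta_4-(a_1-c_2)=q+\alpha(1+q)-\beta$. Whenever $\beta\neq q+\alpha(1+q)$ the index $i=0$ again separates the two modules and we are done. The degenerate possibility $\beta=q+\alpha(1+q)$ is the main obstacle: here $p_0$ fails to distinguish $\eta_3$ from $\eta_4$, so one must pass to a higher product $p_i$ with $i\ge1$. These are not accessible through the first subconstituent alone, so I would instead invoke the explicit formulas for the action of $L$ and $R$ on thin endpoint-$1$ modules from Go and Terwilliger~\cite{GoTer,Ter} to compute $p_1(\eta_3)$ and $p_1(\eta_4)$ and verify $p_1(\eta_3)\neq p_1(\eta_4)$ (or, alternatively, to show that $\beta=q+\alpha(1+q)$ is incompatible with $\Ga$ being non-bipartite of classical type with every endpoint-$1$ $T$-module thin). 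Settling this degenerate subcase is where the real work lies, while the generic part of both cases is resolved cleanly by the quadratic $p_0(\eta)$ above.
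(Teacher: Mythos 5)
Your overall strategy coincides with the paper's: both reduce to Proposition~\ref{tilde}, dispose of the unequal-diameter pairs at once, and then compare the basis-independent products $p_i=\beta_{i+1}\gamma_i$ (the eigenvalues of $LR$ on $E^*_{i+1}W$). Your derivation of $p_0(\eta)=(k-c_2)+(a_1-c_2)\eta-\eta^2$ from the identity $E^*_1A^2E^*_1=(k-c_2)I+(a_1-c_2)\tA+c_2J$ on $E^*_1V$ is correct and is a pleasant, more elementary route than the paper's, which instead reads off $\gamma_0=c_1$ and $\beta_1,\beta'_1$ from the explicit Go--Terwilliger basis formulas. Your Case~A identity $\eta_1+\eta_2-(a_1-c_2)=-\alpha q^2(q^{D-2}-1)/(q-1)$ checks out and, together with $\eta_1\ne\eta_2$ (guaranteed by Theorem~\ref{iso} since $W\not\cong W'$ as $T$-modules), settles that case; the paper reaches the same conclusion by showing $\beta_1=\beta'_1$ forces $\alpha=0$ or $q+\beta-\alpha=0$ and excluding the latter via $b_1>0$, which in your parametrization is exactly the excluded degeneration $\eta_1=\eta_2$.

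The gap is in Case~B. You correctly locate the obstruction: $p_0$ fails to separate $\eta_3$ from $\eta_4$ precisely when $\beta=q+\alpha(q+1)$, and this possibility cannot be dismissed at this stage of the paper, since the relation $\beta=\alpha(q^{D+1}-1)/(q-1)-q$ is only derived later (Theorem~\ref{lem:local1}) using the present lemma. But you then merely describe two possible strategies --- compute $p_1$ from the Go--Terwilliger formulas, or show the parameter relation is infeasible --- without carrying either out; by your own admission this is ``where the real work lies,'' so the proof is incomplete. The missing step is exactly what the paper supplies: substituting $\beta=q+\alpha(q+1)$ into the explicit formulas gives $\beta_2=q(q^D-q^2)/(q-1)$ and $\beta'_2=\bigl(1+\alpha(q+1)\bigr)(1-\alpha q)\,q(q^D-q^2)/\bigl((1+\alpha)(q-1)\bigr)$, and equating them forces $\alpha^2q(q+1)=0$, i.e.\ $\alpha=0$, a contradiction. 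Until you perform this (or an equivalent) computation of $p_1(\eta_3)$ versus $p_1(\eta_4)$, the lemma is not proved.
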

\begin{proof}
If $W$ and $W'$ have different diameters, then they are clearly nonisomorphic as $T_f$-modules. Therefore, assume that $W$ and $W'$ have the same diameter $d$. Suppose first that $d=D-2$. Then, without loss of generality, we assume that the local eigenvalue of $W$ is $\eta_1$ and the local eigenvalue of $W'$ is $\eta_2$. Let $v$ ($v'$, respectively)  denote a non-zero vector in $\e_1W$ ($\e_1W'$, respectively). Then, $\{\e_{i+1} A_i v\}_{i=0}^{D-2}$ ($\{\e_{i+1} A_i v'\}_{i=0}^{D-2}$) is a basis for $W$ ($W'$, respectively), see \cite[Theorem 10.3]{GoTer}. Let $\beta_i, \beta'_i \; (1 \le i \le D-2)$ and $\gamma_i, \gamma'_i \; (0 \le i \le D-3)$ denote the corresponding scalars as in Proposition \ref{tilde}. Using \cite[Theorem 10.6]{GoTer} and \cite[Section 13, p. 184]{Ter}, we find that $\gamma_i = \gamma'_i = c_{i+1}$, $\beta_i=b_{i+1}$, and  
$$
\beta'_i= \frac{c_{i+1}}{c_i} \frac{q^D-q^{i+1}}{q^{i+1}-1} \frac{q^i-1}{q-1} \Bigg( \beta - \alpha \frac{q^i-1}{q-1} \Bigg).
$$
By Proposition \ref{tilde}, $W$ and $W'$ are isomorphic as $T_f$-modules if and only if $\beta_i = \beta'_i$ for $1 \le i \le D-2$. It is now easy to see that $\beta_1 = \beta'_1$ implies $\alpha=0$ or $q + \beta-\alpha=0$. As $\alpha \ne 0$ by assumption, we must have $q + \beta-\alpha=0$. However, one could now easily check that this implies $b_1 < 0$, a contradiction. This shows that $\beta_1 \ne \beta'_1$ and consequently $W$ and $W'$ are nonisomorphic as $T_f$-modules.

Suppose now that $d=D-1$. We proceed similarly as above. Without loss of generality, assume that the local eigenvalue of $W$ is $\eta_3$ and the local eigenvalue of $W'$ is $\eta_4$. Let $v$ ($v'$, respectively) denote a nonzero vector in $\e_1W$ ($\e_1W'$, respectively). Then, $\{\e_{i+1} A_i v\}_{i=0}^{D-1}$ ($\{\e_{i+1} A_i v'\}_{i=0}^{D-1}$) is a basis for $W$ ($W'$, respectively), see \cite[Theorem 11.4]{Ter}. Let $\beta_i, \beta'_i \; (1 \le i \le D-2)$ and $\gamma_i, \gamma'_i \; (0 \le i \le D-3)$ denote the corresponding scalars as in Proposition \ref{tilde}. Using \cite[Theorem 11.6]{Ter} and \cite[Section 13, p. 184]{Ter}, we find that $\gamma_i = \gamma'_i = c_{i+1}$, $\beta_i=b_i$ and  
$$
\beta'_i= \frac{c_{i+1}}{c_i} \frac{q^D-q^i}{q^{i+1}-1} \frac{q^i-1}{q-1} \Bigg( \beta - \alpha \frac{q^{i+1}-1}{q-1} \Bigg).
$$
By Proposition \ref{tilde}, $W$ and $W'$ are isomorphic as $T_f$-modules if and only if $\beta_i = \beta'_i$ for $1 \le i \le D-1$. It is now easy to see that $\beta_1 = \beta'_1$ implies $\alpha=0$ or $\alpha q + q - \beta +\alpha=0$. As $\alpha \ne 0$ by assumption, we must have that  $\alpha q + q - \beta +\alpha=0$. However, one could now easily check that this implies $\beta_2 \ne \beta'_2$, and consequently $W$ and $W'$ are nonisomorphic as $T_f$-modules.
\end{proof}

\begin{corollary}
	\label{cor:local}
	With reference to Notation~\ref{not3.2}, assume $\alpha \ne 0$. Then, the following (i), (ii) hold.
	\begin{itemize}
		\item[(i)] $\Gamma$ has, up to isomorphism, exactly two irreducible $T$-modules with endpoint $1$; one	of these modules has diameter $D-2$ and the other one has diameter $D-1$.
		\item[(ii)] $\Delta$ is a non-complete strongly regular graph.
	\end{itemize}
\end{corollary}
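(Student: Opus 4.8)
The plan is to derive both statements from Lemma~\ref{lem:noniso} together with the structural results on the local graph $\Delta$ collected in Section~\ref{sec:locI}. For part~(i), I first invoke Proposition~\ref{prop:loc=loc}: since every irreducible $T$-module with endpoint $1$ is thin by assumption, the set of isomorphism classes of such modules is in bijection with the set $\Phi$ of distinct local eigenvalues among $\eta_2,\ldots,\eta_k$, and by \eqref{4eta} these local eigenvalues lie in $\{\eta_1,\eta_2,\eta_3,\eta_4\}$. Next I recall that by Theorem~\ref{oldpaper}(ii), the support assumption forces the isomorphism class of an irreducible $T_f$-module to be determined by its endpoint $r$ and diameter $d$. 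Thus among the (at most four) $T$-modules with endpoint $1$, any two with the same diameter must become isomorphic as $T_f$-modules. But Lemma~\ref{lem:noniso} says precisely that nonisomorphic $T$-modules with endpoint~$1$ stay nonisomorphic as $T_f$-modules when $\alpha\neq 0$. Reconciling these two facts shows that no two of the endpoint-$1$ modules can share a diameter, so there can be at most one module of diameter $D-2$ (eigenvalue in $\{\eta_1,\eta_2\}$) and at most one of diameter $D-1$ (eigenvalue in $\{\eta_3,\eta_4\}$).

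It then remains to show that both diameters actually occur, i.e.\ that there is exactly one module of each diameter rather than zero. The key point is that $\Delta$ must have more than one distinct nontrivial local eigenvalue. Since $\Ga$ is non-bipartite with $a_1\neq 0$, the local graph $\Delta$ is a nonempty regular graph on $k$ vertices of valency $a_1$; if $\Delta$ had only the trivial eigenvalue $\eta_1=a_1$ among its eigenvalues then $\Delta$ would be a disjoint union of cliques, which (together with the classical-parameter constraints and $D\ge 4$) I expect to rule out, forcing at least two distinct values in $\Phi$. I would pin down which of $\eta_1,\eta_2,\eta_3,\eta_4$ actually arise as local eigenvalues: by \eqref{4eta} and the established facts that $\eta_1,\eta_2$ give diameter $D-2$ and $\eta_3,\eta_4$ give diameter $D-1$, the existence of two distinct local eigenvalues with the correct diameters yields one module of each diameter. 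Combined with the uniqueness from the previous paragraph, this gives exactly two modules, one of each diameter, proving~(i).

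For part~(ii), I read off the structure of $\Delta$ from part~(i). The distinct nontrivial local eigenvalues of $\Ga$ are exactly the distinct values in $\Phi\subseteq\{\eta_1,\eta_2,\eta_3,\eta_4\}$, and part~(i) has shown there are exactly two isomorphism classes of endpoint-$1$ modules, corresponding to exactly two distinct nontrivial local eigenvalues. Hence $\Delta$, being a connected regular graph, has exactly three distinct eigenvalues in total (the valency $\eta_1=a_1$ together with the two nontrivial values). By the standard characterization recalled in Section~\ref{sec:drg}, a connected regular graph with exactly three distinct eigenvalues is strongly regular. I must also check $\Delta$ is connected and non-complete: a complete graph has only two distinct eigenvalues, which would contradict the existence of two distinct nontrivial local eigenvalues, so $\Delta$ is non-complete; connectivity follows from the fact that $\Delta$ is not a disjoint union of cliques (already argued above). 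This yields that $\Delta$ is a non-complete strongly regular graph.

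The main obstacle I anticipate is the existence half of part~(i): showing that \emph{both} diameters genuinely occur rather than just bounding the count from above. This requires verifying that at least two of the four candidate local eigenvalues in \eqref{4eta} are both distinct and actually realized, which amounts to excluding the degenerate possibility that $\Delta$ is a union of cliques (equivalently, that $\Ga$ has a single nontrivial local eigenvalue). I would handle this by combining the coincidence conditions already noted after \eqref{4eta} ($\eta_4=\eta_2$ iff $\beta=\alpha(q^D-1)/(q-1)$, $\eta_4=\eta_3$ iff $\alpha=0$) with the hypothesis $\alpha\neq 0$ and the positivity constraints $b_i,c_i>0$ on the classical parameters to confirm that the relevant eigenvalues are distinct and realizable; this is the step where the delicate arithmetic of the classical parameters, rather than formal module theory, does the work.
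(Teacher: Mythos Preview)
Your upper bound in part~(i)---at most one module of each diameter---is correct and matches the paper: combine Theorem~\ref{oldpaper}(ii) with Lemma~\ref{lem:noniso} exactly as you do.

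The genuine gap is the lower bound, and you flag it yourself. Your proposed strategy (show via parameter arithmetic that $\Delta$ cannot degenerate, hence $|\Phi|\ge 2$) is neither carried out nor quite the right reduction. First, knowing that the four values in \eqref{4eta} are pairwise distinct says nothing about which of them are actually \emph{realized} as eigenvalues of $\Delta$; your coincidence conditions rule out collisions, not nonoccurrence. Second, your equivalence ``$\Delta$ a union of cliques $\Leftrightarrow$ single nontrivial local eigenvalue'' is off: a disjoint union of at least two equal cliques already has $|\Phi|=2$ (namely $a_1$ and $-1$), so that case does not correspond to $|\Phi|=1$. The paper bypasses all of this arithmetic and instead invokes \cite[Theorem~1.3]{CN}: if $\Gamma$ had, up to isomorphism, only one irreducible $T$-module with endpoint~$1$, then $\Gamma$ would be bipartite or almost bipartite, forcing $a_1=a_2=0$; from the classical parameters this gives $\alpha=0$ and $\beta=1$, contradicting the hypothesis $\alpha\ne 0$. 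This single external citation is the missing ingredient.

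For part~(ii) you assert that $\Delta$ is connected and then deduce strong regularity from the three-eigenvalue criterion. But connectivity is neither proven at this stage nor needed for the statement. The paper simply observes that, by Proposition~\ref{prop:loc=loc} and part~(i), $\Delta$ has at most three distinct eigenvalues; if it has two, $\Delta$ is a disjoint union of equal cliques (still a strongly regular graph in the paper's convention, with $\mu=0$), and if it has three, then $a_1$ is simple, $\Delta$ is connected, and hence connected strongly regular. Non-completeness follows from $b_1>0$. The stronger fact that $\Delta$ is actually connected is established only later, in Theorem~\ref{lem:local1}(i), by a separate argument.
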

\begin{proof}
	(i) Recall that by assumption $\Gamma$ supports a uniform structure with respect to $x$. In virtue of Theorem~\ref{oldpaper}, the isomorphism class of any irreducible $T$-module with endpoint $1$ is determined by its diameter. By taking into account Proposition~\ref{prop:diam} and Lemma \ref{lem:noniso}, this implies that $\Gamma$ must admit, up to isomorphism, at most two irreducible $T$-modules with endpoint $1$. Assume for a moment that $\Ga$ has, up to isomorphism, just one irreducible $T$-module with endpoint $1$. Then, $\Ga$ is bipartite or almost bipartite, see \cite[Theorem 1.3]{CN}. Consequently, $a_1=a_2=0$, which implies that $\alpha=0$ and $\beta=1$. It follows that $\Ga$ is bipartite, a contradiction. Therefore, $\Gamma$ has, up to isomorphism, exactly two irreducible $T$-modules with endpoint $1$. By Proposition~\ref{prop:diam} and Lemma \ref{lem:noniso}, one	of these modules has diameter $D-2$, and the other has diameter $D-1$. 
	
	\smallskip \noindent
	(ii) By Proposition \ref{prop:loc=loc} and (i) above, the local graph $\Delta$ of $\Gamma$ has at most three different eigenvalues. Recall that $\Delta$ is regular with valency $a_1$. If $\Delta$ has two different eigenvalues, then $\Delta$ is a disjoint union of cliques (of the same size). Note that $\Delta$ is not complete (otherwise $\Ga$ is complete, too). If $\Delta$ has three different eigenvalues, then the valency $a_1$ is an eigenvalue with multiplicity $1$. It follows that $\Delta$ is connected, and so it is a connected strongly regular graph. 
\end{proof}

Next, we describe the eigenvalues of the local graph $\Delta$. 

\begin{theorem}
	\label{lem:local1}
	With reference to Notation~\ref{not3.2}, assume $\alpha \ne 0$. Let $(n=b_0, k=a_1, \lambda, \mu)$ denote the parameters of $\Delta$. Then, the following (i), (ii) hold. 
	\begin{itemize}
		\item[(i) ] $\Delta$ is a connected strongly regular graph with  eigenvalues
		$$
		a_1, \; \alpha q \frac{q^{D-1}-1}{q-1}-1, \; -q-1.
		$$ 
		
		\item [(ii)] We have 
		$$
		\beta=\alpha\frac{q^{D+1}-1}{q-1}-q.
		$$
		In particular,
		$$
		  n = b_0 = \frac{(q^D-1)(\alpha q^{D+1}-q^2+q-\alpha)}{(q-1)^2}, \quad k=a_1=\frac{(q+1)(\alpha q^D - q - \alpha +1)}{q-1},
		$$
		$$
		  \lambda =  \frac{\alpha q^D + \alpha q^2 - q^2 - \alpha q - q - \alpha + 2}{q-1}, \quad \mu=\alpha(q+1).
		$$
	\end{itemize}
\end{theorem}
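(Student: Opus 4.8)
The plan is to determine the three eigenvalues of $\Delta$ from the local eigenvalue data accumulated in the previous sections, and then to pin down $\beta$ via a strongly-regular-graph identity. By Corollary~\ref{cor:local}(ii), $\Delta$ is a non-complete strongly regular graph, hence connected with exactly three distinct eigenvalues: the valency $a_1$ (with multiplicity $1$), and two further eigenvalues which must appear among the nontrivial local eigenvalues $\eta_2,\ldots,\eta_k$. By Proposition~\ref{prop:loc=loc}, the distinct nontrivial local eigenvalues are exactly the local eigenvalues of irreducible $T$-modules with endpoint $1$, and by Corollary~\ref{cor:local}(i) there are, up to isomorphism, exactly two such modules: one of diameter $D-2$ and one of diameter $D-1$. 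Recalling from the discussion around \eqref{4eta} that diameter $D-2$ forces local eigenvalue in $\{\eta_1,\eta_2\}=\{-q-1,\ \beta-\alpha-1\}$ while diameter $D-1$ forces local eigenvalue in $\{\eta_3,\eta_4\}=\{-1,\ \alpha q\frac{q^{D-1}-1}{q-1}-1\}$, I would argue that the two non-principal eigenvalues of $\Delta$ are precisely $\eta_4=\alpha q\frac{q^{D-1}-1}{q-1}-1$ and $\eta_1=-q-1$. This would establish part~(i).

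The key step here is excluding $\eta_2$ and $\eta_3$ as eigenvalues of $\Delta$. First I would rule out $\eta_3=-1$: an eigenvalue $-1$ of a connected strongly regular graph forces $\Delta$ to be a disjoint union of cliques (since $-1$ is an eigenvalue only of complete multipartite complements / union-of-cliques situations), contradicting connectedness unless $\Delta$ is complete, which is excluded. Thus the diameter-$(D-1)$ module must carry local eigenvalue $\eta_4$, which in turn requires $\alpha\neq 0$ (already assumed) and $\beta\neq\alpha\frac{q^D-1}{q-1}$; I expect this inequality to be forced, or to be derivable as a consequence rather than an obstruction. Ruling out $\eta_2=\beta-\alpha-1$ as the diameter-$(D-2)$ eigenvalue is the more delicate point: here I would use the standard strongly-regular-graph relation between the two non-principal eigenvalues $r>s$, namely $\lambda-\mu=r+s$ and $\mu-k=rs$, together with the fact that for a strongly regular graph the non-principal eigenvalues satisfy $s\le -1\le r$ with $s$ an integer in the conference-graph-excluded case. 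Since $-q-1\le -2$ is a natural candidate for $s$ and $\eta_4$ for $r$, I would show that taking $\eta_2$ instead of $\eta_1$ as the remaining eigenvalue leads to an arithmetic contradiction (for instance a negative or non-integer multiplicity), thereby selecting $\eta_1=-q-1$.

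For part~(ii), once the eigenvalues $r=\eta_4$ and $s=\eta_1=-q-1$ are fixed, I would combine $rs=\mu-a_1$ with the values $a_1=p^1_{11}$ and $c_2=p^2_{11}$ expressed through the classical parameters via the formulas for $c_i,b_i$ in Section~\ref{sec:drg} (using $a_1=k-b_1-c_1=k-b_1-1$ and the classical expressions for $b_0,b_1,c_2$). Imposing consistency of these two independent expressions for $a_1$ — one coming from the classical parameters directly, the other from the SRG eigenvalue identity $a_1+rs=\mu$ together with $r+s=\lambda-\mu$ — yields a single polynomial equation in $\alpha,\beta,q,q^D$ whose solution is exactly $\beta=\alpha\frac{q^{D+1}-1}{q-1}-q$. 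Substituting this back into the classical-parameter formulas for $b_0$, $a_1$, $\lambda=a_1+s+r-\mu$ (equivalently $\lambda=p^2_{11}$-type count) and $\mu=a_1+rs$ then gives the four displayed values for $(n,k,\lambda,\mu)$. The main obstacle I anticipate is step two, the clean exclusion of $\eta_2$ and $\eta_3$; the rest is a determined but routine manipulation of the classical intersection numbers against the two-eigenvalue SRG identities, and I would present those computations only at the level needed to exhibit the final $\beta$ relation.
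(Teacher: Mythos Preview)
Your overall strategy matches the paper's, but there are two concrete gaps, the second of which is the essential one.

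First, a minor point: Corollary~\ref{cor:local}(ii) only asserts that $\Delta$ is a \emph{non-complete} strongly regular graph; this still allows $\Delta$ to be a disjoint union of cliques. The paper opens the proof of Theorem~\ref{lem:local1} by ruling out that case directly (if $\Delta$ were a union of cliques its eigenvalues would be $a_1$ and $-1$, and one checks that the two resulting local eigenvalues both correspond to modules of diameter $D-1$, contradicting Corollary~\ref{cor:local}(i)). Only after this does one know $\Delta$ is connected with three eigenvalues $a_1>r>s$, and then $r>0$, $s\ne -1$ exclude $\eta_3$ exactly as you intend.

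Second, and this is the real gap: you do not identify the constraint that actually does the work. Once $r,s$ and $k=a_1$ are fixed, the SRG identities give
\[
\mu=a_1+rs,\qquad \lambda=\mu+r+s,\qquad n=1+a_1+\frac{a_1(a_1-\lambda-1)}{\mu},
\]
and the single equation the paper imposes is $n=b_0$, i.e.\ the vertex count of $\Delta$ must equal the valency of $\Gamma$. This one equation is used twice: for the pair $\{\eta_2,\eta_4\}$ it forces $\beta\in\{0,\alpha\}$, both impossible (so $(r,s)=(\eta_4,\eta_1)$), and for the pair $(\eta_4,\eta_1)$ it yields exactly $\beta=\alpha\frac{q^{D+1}-1}{q-1}-q$. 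Your proposed route---``two independent expressions for $a_1$'' and an appeal to $c_2$---does not produce a usable equation: $a_1$ is determined once and for all by the classical parameters, the SRG identities merely express $\mu,\lambda,n$ in terms of $a_1,r,s$, and $c_2$ of $\Gamma$ is not the same as $\mu$ of $\Delta$ (common neighbours of two non-adjacent vertices of $\Delta$ need not lie in $\Gamma(x)$). Likewise, a multiplicity argument to exclude $\eta_2$ would itself require knowing $n$, so you are back to $n=b_0$. Replace your part~(ii) mechanism by the $n=b_0$ comparison and both the exclusion of $\eta_2$ and the determination of $\beta$ go through in one stroke; the displayed formulas for $n,k,\lambda,\mu$ then follow by substitution.
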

\begin{proof}
	(i) Assume first that $\Delta$ is a disjoint union of cliques. Then, it is clear that its eigenvalues are its valency $a_1$ and $-1$. As $\Delta$ is not complete, it is disconnected, and so $a_1$ is an eigenvalue of $\Delta$ with multiplicity greater or equal to two. Consequently, $\Ga$ has an irreducible $T$-module $W$ with endpoint $1$ and local eigenvalue $a_1$. As $a_1$ is non-negative, we have either $a_1 = \eta_2$, or $a_1 = \eta_4$. If $a_1=\eta_2$, then $\alpha=0$, a contradiction. Consequently, $a_1 = \eta_4$, which implies that the diameter of $W$ is $D-1$. Let $W'$ denote an irreducible $T$-module with endpoint $1$ and local eigenvalue $-1=\eta_3$. As $\eta_1, \eta_2, \eta_3$ are pairwise distinct, the discussion at the beginning of this section implies that the diameter of $W'$ is $D-1$, contradicting Corollary \ref{cor:local}(i). This shows that $\Delta$ is a connected strongly regular graph. 
	
	Let $r > 0$ and $s < 0$ denote the eigenvalues of $\Delta$, different from $a_1$. As $\Delta$ is not a disjoint  union of cliques, we have that $s \ne -1$. As one of the two non-isomorphic $T$-modules with endpoint $1$ has diameter $D-1$, it follows that one of $r$ or $s$ must be equal to  $\eta_4$. Therefore, as $r$ is positive, we have the following three possibilities:
	$$
	  (r, s) \in \{(\eta_4,\eta_1), (\eta_4, \eta_2), (\eta_2, \eta_4)\}.
	$$
	
	Assume first that $(r, s) \in \{ (\eta_4, \eta_2), (\eta_2, \eta_4) \}$. Using \cite[Theorem 1.3.1 (iii)]{BCN}, we find 
	$$
	  \mu = k+ rs = \frac{\alpha (\beta  - \alpha) (q^D -  q)}{q-1} + 1, \quad
	  \lambda = \mu + r + s = \frac{\alpha (\beta - \alpha + 1)(q^D-q) }{q-1} + \beta-2,
	$$
	$$
	  n = 1 + k +\frac{k(k-\lambda-1)}{\mu} = \frac{\beta (q^D-1)}{(\beta - \alpha)(q^D  - q)+q-1}.
	$$
	Setting $n=b_0$, we find either $\beta=0$ or $\beta=\alpha$. If $\beta=0$, then $b_0 = 0$, a contradiction. If $\beta=\alpha$, then $b_1=0$, a contradiction. This shows that $(r, s)=(\eta_4, \eta_1)$.
	
	\smallskip \noindent
	(ii) Recall that $\beta \ne 0$; otherwise $b_0=0$, a contradiction. Computing $n$ similarly as above and setting $n=b_0$, we get  $\beta=\alpha\frac{q^{D+1}-1}{q-1}-q$. The second part of the claim now follows from \cite[Theorem 1.3.1 (iii)]{BCN}.
\end{proof}


\section{The local graph $\Delta$}
	\label{sec:localgraph}

With reference to Notation~\ref{not3.2}, in this section, we continue to study our local graph $\Delta$. To this end, we recall a result by Neumaier about connected strongly regular graphs that we will find useful later in this section. 

\begin{theorem}[{\cite[Theorem 4.7]{AN}}]\label{neumaier}
	Let $G$ be a strongly regular graph with parameters $(n, k,
	\lambda, \mu)$ and eigenvalues $k >	r > s$. Assume that $s < -1$ is integral. Then, at least one of the following conditions must hold:
	\begin{enumerate}[label=(\roman*)]
		\item $r \leq \frac{s(s+1)(\mu+1)}{2} -1$;
		\item $\mu= s^2$ (in which case $G$ is a Steiner graph derived from a Steiner $2$-system in which each line contains $s$ points);
		\item $\mu = s(s+1)$ (in which case $G$ is a Latin square graph derived from an $s$-net).
	\end{enumerate}
\end{theorem}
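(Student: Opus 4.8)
The plan is to follow the circle of ideas built around the Delsarte--Hoffman clique bound and the notion of a \emph{geometric} strongly regular graph. Write $m:=-s\ge 2$, so that the smallest eigenvalue is $-m$ with $m$ an integer. The first ingredient is the Delsarte--Hoffman bound: every clique $C$ of $G$ satisfies $|C|\le 1-k/s = 1+k/m$. I would call a clique attaining this bound a \emph{Delsarte clique} (a \emph{line}). Since such a clique meets the interlacing bound with equality, the induced partition of $V(G)$ into $C$ and its complement is tight, which forces $C$ to be \emph{regular}: every vertex outside $C$ is adjacent to a constant number of its vertices. These lines are the building blocks of the geometric structure one hopes to detect, so the strategy is to decide whether the lines of $G$ cover its edges.

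The heart of the matter is a double count over \emph{claws}. For a vertex $p$, the local graph $\Delta(p)=\Gamma(p)$ has all its eigenvalues bounded below through interlacing, which controls how the $k$ neighbours of $p$ distribute among the maximal cliques (lines) through $p$; in particular two distinct Delsarte cliques through $p$ meet only in $p$. I would fix an edge and count, in two ways, the configurations formed by a line through one endpoint together with the roughly $\mu$ common neighbours across a non-edge. The dichotomy to extract is the following: either $r$ is small, and precisely $r\le \frac{s(s+1)(\mu+1)}{2}-1=\binom{m}{2}(\mu+1)-1$, which is exactly case (i); or else every edge of $G$ lies in a \emph{unique} Delsarte clique, so the lines partition the edge set and $G$ is geometric.

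In the geometric case, the points and lines of $G$ constitute a partial geometry $pg(s',t',\alpha')$ whose point graph is $G$. Here the number of lines through each point is $m$, so $t'=m-1$, and the standard parameter formulas give $k=s'(t'+1)$, $\lambda=s'-1+t'(\alpha'-1)$, $\mu=\alpha'(t'+1)=\alpha' m$, and $r=s'-\alpha'$. It then remains to show that, whenever the claw bound (i) fails, the connection number $\alpha'$ is forced to be extremal. Substituting $r=s'-\alpha'$ into the negation of (i) and using the admissible range $1\le\alpha'\le t'+1=m$, one checks that $\alpha'\le m-2$ pushes $r$ back below the claw bound; hence $\alpha'\in\{m-1,m\}$. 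The value $\alpha'=m$ means every line through an external point meets a given line, i.e.\ the dual structure is a linear space, so $G$ is the block graph of a Steiner $2$-system with blocks of size $-s$, giving $\mu=m^2=s^2$, which is case (ii); the value $\alpha'=m-1$ yields a net (equivalently a transversal design, a set of mutually orthogonal Latin squares), giving $\mu=m(m-1)=s(s+1)$, which is case (iii).

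The main obstacle is the claw-counting step of the second paragraph, which must simultaneously produce the \emph{explicit} constant $\frac{s(s+1)}{2}=\binom{m}{2}$ and the factor $(\mu+1)$, and also force geometricity when the bound is violated. One has to arrange the count so that the $\binom{m}{2}$ pairs of lines through a common point and the $\mu$-sized common-neighbour sets appear with the right multiplicities, and then argue that exceeding the bound leaves no room for an edge to sit in two distinct maximal cliques. Once the partial-geometry structure is secured, the remaining parameter analysis pinning $\alpha'$ to $\{m-1,m\}$ is routine by comparison.
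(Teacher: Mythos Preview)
The paper does not prove this statement at all: Theorem~\ref{neumaier} is quoted from Neumaier's 1979 paper \cite{AN} and used as a black box in the proof of Theorem~\ref{thm:alpha}. There is therefore no ``paper's own proof'' to compare your attempt against.

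For what it is worth, your sketch does follow the broad architecture of Neumaier's original argument---Delsarte--Hoffman cliques, the dichotomy between the claw bound and geometricity, and the identification of the extremal partial geometries with Steiner systems and nets---and your parameter identifications in the geometric case ($t'+1=m$, $\mu=\alpha' m$, $r=s'-\alpha'$) are correct. However, two points deserve caution. First, the mechanism that actually produces the constant $\binom{m}{2}(\mu+1)-1$ is a count over $(m{+}1)$-claws at a vertex (one fixes an $m$-coclique $y_1,\dots,y_m$ in $\Gamma(v)$ and bounds the number of further $w\in\Gamma(v)$ non-adjacent to every $y_i$ using the $\mu$ common neighbours of each non-edge $y_iy_j$), not the ``line through one endpoint plus common neighbours across a non-edge'' configuration you describe; as written, your proposed double count would not obviously yield either the bound or the geometricity conclusion. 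Second, your assertion that ``$\alpha'\le m-2$ pushes $r$ back below the claw bound'' is not justified: for fixed $m$ and $\alpha'$ there is no a~priori upper bound on $s'$ coming from the inequality alone, so this step needs the actual structural input (absence of $(m{+}1)$-claws forcing the clique cover of each neighbourhood, hence bounding $s'$), not a bare substitution into the negation of~(i).
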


\begin{lemma}\label{claim1}
	With reference to Notation~\ref{not3.2}, assume $\alpha \ne 0$. Then, the local graph $\Delta$ is not a conference graph. 
\end{lemma}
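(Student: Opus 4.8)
The goal is to show that the local graph $\Delta$, which by Theorem~\ref{lem:local1} is a connected strongly regular graph with eigenvalues $a_1$, $r=\alpha q\frac{q^{D-1}-1}{q-1}-1$, and $s=-q-1$, cannot be a conference graph. Recall that a conference graph is a strongly regular graph whose restricted eigenvalues $r,s$ satisfy $r+s=-1$ (equivalently $\lambda-\mu=-1$ and $k=2\mu$), with the two nontrivial eigenvalues being irrational conjugates $(-1\pm\sqrt{n})/2$ unless the parameters take a very special form. The natural line of attack is to assume for contradiction that $\Delta$ is a conference graph and derive an impossible arithmetic relation among $q$, $\alpha$, and $D$.

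The plan is as follows. First I would write down the conference-graph condition in the cleanest form available, namely $r+s=-1$, and substitute the explicit values of $r$ and $s$ from Theorem~\ref{lem:local1}(i). This gives
\[
\alpha q\frac{q^{D-1}-1}{q-1}-1 \;+\;(-q-1)\;=\;-1,
\]
i.e. $\alpha q\frac{q^{D-1}-1}{q-1}=q+1$. Since $q\ge 2$ and $D\ge 4$, the left-hand side is $\alpha q(1+q+\cdots+q^{D-2})$, which for $\alpha\ge 1$ already vastly exceeds $q+1$; more precisely it is at least $q(1+q+q^2)$, which is strictly larger than $q+1$ for every $q\ge 2$. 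This forces $\alpha$ to be non-integral or too small. I would make this rigorous by noting that $\alpha$ must be a positive integer (or at least positive rational satisfying the integrality constraints coming from $c_2$, $\mu=\alpha(q+1)$, etc.), and the inequality $\alpha q\frac{q^{D-1}-1}{q-1}>q+1$ holds strictly, giving the contradiction directly.

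An alternative and perhaps cleaner route, should one prefer to avoid any appeal to integrality of $\alpha$, is to use the other characterization of conference graphs. A conference graph has $r$ and $s$ irrational (they are the conjugate roots $(-1\pm\sqrt n)/2$) precisely when the multiplicities of $r$ and $s$ are equal; here, however, both $r=\alpha q\frac{q^{D-1}-1}{q-1}-1$ and $s=-q-1$ are manifestly integers, since $q$ and $\alpha$ are integers and $\frac{q^{D-1}-1}{q-1}$ is an integer. A conference graph with integral eigenvalues must satisfy $r+s=-1$, so either way the computation reduces to the displayed equation above, and the contradiction is the same. I would present the argument through the $r+s=-1$ relation since it is the most transparent.

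The main obstacle, though a minor one, is ensuring that the definition of conference graph being used matches the eigenvalue bookkeeping — specifically that the ``restricted'' eigenvalues $r,s$ (the ones distinct from the valency $a_1$) are the ones constrained by $r+s=-1$, and that I have correctly identified which of the three eigenvalues from Theorem~\ref{lem:local1}(i) is the valency. Since $a_1$ is the valency and $r,s$ are the other two, the relation to impose is $r+s=-1$ with $r=\alpha q\frac{q^{D-1}-1}{q-1}-1$ and $s=-q-1$, and the resulting equation is plainly violated under the standing hypotheses $q\ge 2$, $D\ge 4$, $\alpha\ne 0$. I expect the whole proof to be two or three lines once the substitution is made.
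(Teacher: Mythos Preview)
Your setup via $r+s=-1$ is fine and is exactly equivalent to the paper's use of $\lambda=\mu-1$: both yield
\[
\alpha q\,\frac{q^{D-1}-1}{q-1}=q+1,\qquad\text{i.e.}\qquad \alpha=\frac{q^2-1}{q^D-q}.
\]
The gap is in what you do next. This equation is \emph{not} ``plainly violated'': it has the perfectly good real solution just displayed, lying in $(0,1)$. Your fallback claim that ``$\alpha$ must be a positive integer'' is unjustified --- nothing in the definition of classical parameters forces $\alpha\in\ZZ$ (and indeed in the very next lemma of the paper the value $\alpha=(q-1)/(q^D-q)$ is entertained). Likewise, the alternative route ``$r,s$ are manifestly integers since $\alpha$ is an integer'' rests on the same unfounded assumption; and even if $r,s$ were integers, that does not by itself rule out a conference graph (Paley$(9)$ has integral eigenvalues).

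What is missing is a \emph{second} constraint. The paper supplies it by invoking the other defining relation of a conference graph, $k=2\mu$: with $\alpha=(q^2-1)/(q^D-q)$ and the formulas of Theorem~\ref{lem:local1}(ii) one computes
\[
k-2\mu=\frac{(q+1)\bigl(q^{D+1}-2q^2+1\bigr)}{q^D-q}\neq 0\quad(q\ge 2,\ D\ge 4),
\]
a contradiction. Your approach can also be completed, but differently: with the above $\alpha$ one gets $\mu=\alpha(q+1)=(q-1)(q+1)^2/(q^D-q)$, and a direct estimate shows $\mu<1$ for $q\ge 2$, $D\ge 4$, contradicting $\mu\ge 1$ for a connected strongly regular graph. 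Either way, you need one more line beyond $r+s=-1$; the single equation does not do the job.
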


\begin{proof}
	Suppose to the contrary that $\Delta$ is a conference graph. Then, $\lambda=\mu-1$ and $k=2 \mu$ by \cite[Section 1.3]{BCN}. So, by Theorem~\ref{lem:local1}(ii), we have that $\alpha=\frac{q^2-1}{q^D-q}$ and 
	$$
	  0 = k-2 \mu = \frac{(q+1)(q^{D+1} - 2 q^2 + 1)}{q^D-q},
	$$
	a contradiction since $q \ge 2$ and $D \ge 4$. 
\end{proof}

\begin{lemma}\label{claim2}
		With reference to Notation~\ref{not3.2}, assume $\alpha \ne 0$. Then, $\eta_4\geq 1$. 
\end{lemma}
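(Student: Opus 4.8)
The plan is to recognize $\eta_4$ as the positive restricted eigenvalue of the local graph $\Delta$ and then to invoke the integrality of the restricted eigenvalues of a non-conference strongly regular graph, which has already been set up by the preceding results.

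First I would recall, via Corollary~\ref{cor:local}(ii) and Theorem~\ref{lem:local1}(i), that under the present hypotheses $\Delta$ is a connected strongly regular graph whose three distinct eigenvalues are $a_1$, $\eta_4 = \alpha q \frac{q^{D-1}-1}{q-1}-1$, and $\eta_1 = -q-1$. More precisely, the proof of Theorem~\ref{lem:local1}(i) identifies $\eta_4$ with the positive restricted eigenvalue $r>0$ (and $\eta_1$ with the negative restricted eigenvalue $s<0$), by way of the case analysis $(r,s)\in\{(\eta_4,\eta_1),(\eta_4,\eta_2),(\eta_2,\eta_4)\}$ that concluded $(r,s)=(\eta_4,\eta_1)$. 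Hence $\eta_4 = r > 0$, and it will suffice to show that $r$ is an integer, since a positive integer is automatically at least $1$.

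Next I would appeal to the standard dichotomy for the restricted eigenvalues of a connected strongly regular graph with parameters $(n,k,\lambda,\mu)$: the eigenvalues $r$ and $s$ are the two roots of $t^2-(\lambda-\mu)t-(k-\mu)=0$, and either both are integers, or $\Delta$ is a conference graph, in which case $r,s=(-1\pm\sqrt{n})/2$ are irrational. By Lemma~\ref{claim1}, $\Delta$ is \emph{not} a conference graph, so the discriminant is a perfect square and both $r$ and $s$ are integers. In particular $\eta_4=r$ is an integer, and combined with $r>0$ this yields $\eta_4\geq 1$, as desired.

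There is essentially no computation involved; the argument is purely structural. The only points requiring care are the clean identification of $\eta_4$ as the \emph{positive} restricted eigenvalue (supplied by the proof of Theorem~\ref{lem:local1}(i)) and the citation of the integrality dichotomy for strongly regular graphs. The single genuine obstacle is excluding the conference case so as to guarantee integrality, and this is precisely the content of Lemma~\ref{claim1}, which I would therefore cite as the crucial input.
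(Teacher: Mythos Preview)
Your overall strategy coincides with the paper's: both arguments use Lemma~\ref{claim1} to conclude that $\Delta$ is not a conference graph, hence the restricted eigenvalues $r,s$ are integers, so it remains only to rule out $\eta_4=r=0$.

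The gap in your argument is precisely at that last step. You write that ``the proof of Theorem~\ref{lem:local1}(i) identifies $\eta_4$ with the positive restricted eigenvalue $r>0$'' and then stop. But the sentence ``Let $r>0$ and $s<0$ denote the eigenvalues of $\Delta$, different from $a_1$'' in the proof of Theorem~\ref{lem:local1}(i) is a naming convention, not a proven strict inequality. Standard strongly regular graph theory only guarantees $r\ge 0$ (with $r=0$ exactly when $\Delta$ is complete multipartite, equivalently $\mu=k$); nothing in Theorem~\ref{lem:local1} excludes this degenerate case. So by citing $r>0$ from there you are, in effect, assuming the content of Lemma~\ref{claim2}.

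The paper closes this gap by an explicit computation: if $\eta_4=0$ then $\alpha=(q-1)/(q^D-q)$, whence $\mu=\alpha(q+1)=(q^2-1)/(q^D-q)<1$ for $q\ge 2$ and $D\ge 4$, contradicting $\mu\ge 1$. This one-line feasibility check is the actual substance of the lemma; once you add it (or the equivalent verification that $\mu<k$ from the parameters in Theorem~\ref{lem:local1}(ii)), your proof becomes complete and is then essentially identical to the paper's.
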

\begin{proof}
	Recall that $\eta_4$ is a non-negative eigenvalue of $\Delta$ and that $\Delta$ is not a conference graph. Therefore, it suffices to show that $\eta_4 \ne 0$. If $\eta_4=0$ then $\alpha=\frac{q-1}{q^D-q}$, and so we get $\mu=\frac{q^2-1}{q^D-q}$ by Theorem~\ref{lem:local1}(ii) . As $\mu\geq1$, $q \ge 2$ and $D \ge 4$, this implies that 
	$$
	  q^3  < q^{D-1} + q^{D-1}-q \le q^D-q \le q^2-1,
	$$ 
	a contradiction. This shows that  $\eta_4\geq 1$. 
\end{proof}

\begin{theorem}
	\label{thm:alpha}
	With reference to Notation~\ref{not3.2}, assume $\alpha \ne 0$.  Then, $\alpha\in\left\lbrace q, q+1\right\rbrace $. 
\end{theorem}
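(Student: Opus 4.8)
The plan is to feed the strongly-regular structure of the local graph $\Delta$, established in Theorem~\ref{lem:local1}, into Neumaier's classification (Theorem~\ref{neumaier}). By Theorem~\ref{lem:local1}, $\Delta$ is a connected strongly regular graph with non-principal eigenvalues $r=\eta_4=\alpha q\,\frac{q^{D-1}-1}{q-1}-1$ and $s=\eta_1=-q-1$, and with $\mu=\alpha(q+1)$. Since $q\ge 2$, we have $s=-q-1\le -3$, an integer with $s<-1$; moreover $\Delta$ is not a conference graph by Lemma~\ref{claim1}, so its eigenvalues $r$ and $s$ are integers. Since $\Delta$ is connected and non-complete, $\mu=\alpha(q+1)\ge 1$ is a positive integer, whence $\alpha\ge \frac{1}{q+1}$. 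With $s$ integral and $s<-1$, Theorem~\ref{neumaier} applies, so at least one of its conditions (i), (ii), (iii) holds.

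I would dispose of (ii) and (iii) at once. As $s(s+1)=q(q+1)$ and $s^2=(q+1)^2$, condition (ii) reads $\alpha(q+1)=(q+1)^2$, giving $\alpha=q+1$, while condition (iii) reads $\alpha(q+1)=q(q+1)$, giving $\alpha=q$; in both cases the desired conclusion holds. It therefore remains to show that condition (i) is impossible, and this is the main obstacle.

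To handle (i), put $N=\frac{q^{D-1}-1}{q-1}=1+q+\cdots+q^{D-2}$, so that $r=\alpha q N-1$. Substituting $s(s+1)=q(q+1)$ and $\mu=\alpha(q+1)$ into $r\le \frac{s(s+1)(\mu+1)}{2}-1$ and clearing the common factor $q$, condition (i) simplifies to $\alpha\bigl(2N-(q+1)^2\bigr)\le q+1$. For $D\ge 4$ one computes $2N-(q+1)^2\ge q^2+1>0$, so combining with $\alpha\ge\frac{1}{q+1}$ yields $N\le (q+1)^2$. However, for $D\ge 5$ we have $N\ge 1+q+q^2+q^3=(q+1)^2+q(q-1)(q+1)>(q+1)^2$, a contradiction. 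Thus condition (i) cannot hold when $D\ge 5$.

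Finally, the borderline case $D=4$ needs an integrality argument. Here $N=q^2+q+1$, and the bound $\alpha\bigl(2N-(q+1)^2\bigr)\le q+1$ becomes $\alpha(q^2+1)\le q+1$; since $\frac{(q+1)^2}{q^2+1}<2$ for $q\ge 2$, this together with $\mu=\alpha(q+1)\ge 1$ forces $\mu=1$ and $\alpha=\frac{1}{q+1}$. But then $r=\eta_4=\frac{q(q^2+q+1)}{q+1}-1$, and reducing modulo $q+1$ via $q\equiv -1$ gives $q(q^2+q+1)\equiv q\not\equiv 0\pmod{q+1}$, so $r$ is not an integer, contradicting the integrality of the eigenvalues of $\Delta$. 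Hence condition (i) is impossible in all cases, only (ii) or (iii) can occur, and consequently $\alpha\in\{q,q+1\}$.
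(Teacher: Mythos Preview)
Your proof is correct and follows essentially the same route as the paper's: apply Theorem~\ref{neumaier} to the local graph $\Delta$ with $s=-q-1$ and $\mu=\alpha(q+1)$, observe that cases (ii) and (iii) immediately yield $\alpha=q+1$ and $\alpha=q$, and then rule out case~(i) by combining the inequality with $\alpha\ge 1/(q+1)$ to force $D=4$, after which an integrality argument finishes. The only cosmetic difference is in the final step for $D=4$: the paper shows that $\lambda=q^2-q-\frac{1}{q+1}$ fails to be an integer, whereas you show that the eigenvalue $r$ itself fails to be an integer; both are equally valid.
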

\begin{proof}
	  To prove the statement, it is enough to show that the case (i) in Theorem~\ref{neumaier} can never be satisfied. Suppose by contradiction that the case (i) in Theorem~\ref{neumaier} holds. Then,
	  \begin{equation}
	  	\label{eq:alpha}
	  	\alpha(2q^{D-1}-q^3-q^2+q-1) \le q^2-1
	  \end{equation}
	  is satisfied, and this implies that
	  $$
	    \alpha \le \frac{q^2-1}{2q^{D-1}-q^3-q^2+q-1},
	  $$
	  as $q \ge 2$ and $D \ge 4$. Recall that $\mu=\alpha(q+1) \ge 1$, and so $\alpha \ge 1/(q+1)$. This, together with the above inequality, implies that 
	  $$
	    2q^{D-1}-q^3-q^2+q-1 \le (q+1)(q^2-1)=q^3 + q^2-q-1,
	  $$
	  or 
	  $$
	     q^{D-1} \le q^3+q^2-q.
	  $$
	  This yields $D=4$, and so \eqref{eq:alpha} gives 
	  $$
	    \alpha \le \frac{q+1}{q^2+1}.
	  $$
	  Now $\mu=\alpha(q+1)$ yields $\mu=1$, and so $\alpha = 1/(q+1)$. It follows that $\lambda=q^2-q-\frac{1}{q+1}$, contradicting the integrality of $\lambda$. It follows from Theorem~\ref{neumaier} that either $\mu=\alpha(q+1) = s^2 = (q+1)^2$ (in which case $\alpha=q+1$), or $\mu=\alpha(q+1) = s(s+1) = q(q+1)$ (in which case $\alpha=q$).
\end{proof}

\begin{corollary}
	\label{xc}
	With reference to Notation~\ref{not3.2}, assume $\alpha \ne 0$.  Then,
	$$
	  (D,q,\alpha,\beta) \in \Bigg\{\Bigg(D, q, q+1, \frac{q^{D+1} (q+1)-q^2-1}{q-1} \Bigg), \;   \Bigg(D, q, q, \frac{q^2 (q^D-1)}{q-1}\Bigg) \Bigg\}.
	$$
\end{corollary}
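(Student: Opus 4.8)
The plan is simply to combine the two immediately preceding results, both of which already operate under Notation~\ref{not3.2} with the standing assumption $\alpha \ne 0$. Theorem~\ref{thm:alpha} pins down $\alpha$ to one of the two values $q$ or $q+1$, and Theorem~\ref{lem:local1}(ii) expresses $\beta$ as an explicit rational function of $\alpha$, $q$, and $D$, namely $\beta = \alpha\frac{q^{D+1}-1}{q-1}-q$. Since $D$ and $q$ remain free parameters throughout, the entire content of the corollary reduces to substituting each admissible value of $\alpha$ into this formula and simplifying the resulting expression for $\beta$.

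Concretely, I would first take $\alpha = q$, where $\beta = q\bigl(\frac{q^{D+1}-1}{q-1}-1\bigr) = q\cdot\frac{q^{D+1}-q}{q-1} = \frac{q^2(q^D-1)}{q-1}$, giving the second tuple in the statement. Then I would take $\alpha = q+1$; clearing denominators yields $\beta = \frac{(q+1)(q^{D+1}-1)-q(q-1)}{q-1}$, and expanding the numerator as $q^{D+2}+q^{D+1}-q^2-1 = q^{D+1}(q+1)-q^2-1$ produces the first tuple. Together these two substitutions exhaust the possibilities permitted by Theorem~\ref{thm:alpha}, so the list of two parameter arrays is complete.

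There is no genuine obstacle here: the statement is a bookkeeping consequence of Theorem~\ref{thm:alpha} and Theorem~\ref{lem:local1}(ii), and the only point requiring care is that the two elementary algebraic simplifications for $\beta$ are carried out correctly. One could optionally record that neither tuple can be discarded at this stage on parameter-feasibility grounds alone, so that both must be carried forward into the subsequent analysis (where the $\alpha=q$ branch ultimately survives and the $\alpha=q+1$ branch is eliminated); but this observation is not needed for the proof of the corollary itself.
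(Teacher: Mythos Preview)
Your proposal is correct and matches the paper's own proof, which simply states that the corollary follows immediately from Theorems~\ref{lem:local1} and~\ref{thm:alpha}. You have merely spelled out the two substitutions and algebraic simplifications that the paper leaves implicit, and both are carried out correctly.
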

\begin{proof}
	Immediately follows from Theorems \ref{lem:local1} and \ref{thm:alpha}.
\end{proof}


\section{The main result}
\label{sec:main}

With reference to Notation~\ref{not3.2}, assume $\alpha \ne 0$.  Recall that by Corollary \ref{xc} the classical parameters of $\Ga$ belong to one of the following two families:
$$
\Bigg(D, q, q+1, \frac{q^{D+1} (q+1)-q^2-1}{q-1} \Bigg), \qquad   \Bigg(D, q, q, \frac{q^2 (q^D-1)}{q-1}\Bigg).
$$
In this section, we show that the graphs with classical parameters belonging to the first of the above two families do not exist. We will also show that if $\Ga$ has classical parameters belonging to the second of the above two families, then $D \equiv 0 \pmod{6}$. The proofs are somehow technical (the main idea is to prove that certain expressions are not integers), but to make things easier for the reader we decided to include most of the details. For ease of reference, we will continue to refer to Notation~\ref{not3.2} in the rest of this section, however we must mention that the proofs of Theorems \ref{fam1} and \ref{fam2}  (as well as Remark \ref{rem}) remain valid even if we omit our assumptions that every $T$-module with endpoint $1$ is thin, and that $\Ga$ supports a uniform structure with respect to $x$.

\begin{theorem}\label{fam1}
	With reference to Notation~\ref{not3.2}, the family of distance-regular graphs with classical parameters 
	$$
	  (D,q,\alpha,\beta)=\Big(D, q, q+1, \frac{q^{D+1} (q+1)-q^2-1}{q-1} \Big)
	$$
does not exist.
\end{theorem}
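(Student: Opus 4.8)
The plan is to treat the quadruple $\big(D,q,q+1,\beta\big)$ with $\beta=(q+1){D+1 \brack 1}-q$ purely as a candidate set of classical parameters and to derive a contradiction from the general feasibility conditions for distance-regular graphs; this is exactly why the argument needs neither the thinness of the endpoint-$1$ modules nor the uniform structure. First I would substitute $\alpha=q+1$ and this $\beta$ into the intersection-number formulas of Section~\ref{sec:drg} and check that all $c_i$, $b_i$, and $a_i=k-b_i-c_i$ come out as positive integers for $q\ge 2$ and $D\ge 4$ (a short computation; for instance $k={D \brack 1}\beta$, and the factors $\beta-\alpha{i \brack 1}$ governing the $b_i$ remain positive). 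Since no contradiction can arise from the intersection numbers themselves, the obstruction must be sought among the remaining standard necessary conditions, and the most promising is the requirement that every eigenvalue multiplicity of $\Ga$ be a positive integer.

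Next I would write the eigenvalues $\theta_0>\theta_1>\cdots>\theta_D$ of $\Ga$ from the classical-parameter formula (see \cite{BCN}), substitute $\alpha=q+1$ and $\beta=(q+1){D+1 \brack 1}-q$, and then compute the multiplicity $m_i$ of a conveniently chosen eigenvalue. I would try the second largest eigenvalue $\theta_1$ first, since for a $Q$-polynomial graph its multiplicity has the cleanest closed form in terms of the standard (cosine) sequence and the Gaussian binomials ${D \brack i}$. Carrying this out yields a single rational expression $m_1=N(q,D)/M(q,D)$ in which $N$ and $M$ are products of $q$-integers ${j \brack 1}$ together with the factors $\beta-\alpha{i \brack 1}$ and $1+\alpha{i \brack 1}$; here I would cancel the common powers of $q$ before analysing divisibility.

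Finally, the crux is to prove that $m_1$ --- or, should $m_1$ happen to be integral, the multiplicity $m_D$ of the least eigenvalue --- is never a positive integer in the range $q\ge 2$, $D\ge 4$. I would attack this number-theoretically, in the spirit of Lemmas~\ref{claim1} and \ref{claim2}: either exhibit a fixed factor of the denominator (a prime, or one of $q+1$, $q^2-1$, ${D \brack 1}$) that divides $M$ to a strictly higher power than it divides $N$, or else bound $m_1$ strictly between two consecutive integers for all admissible $q$ and $D$. I expect this last step to be the main obstacle: the closed form is a ratio of products of $q$-integers, and controlling its divisibility uniformly in both $q$ and $D$ is delicate, so I anticipate having to split off the dominant power of $q$, reduce the residual factor modulo a well-chosen modulus, and treat the smallest diameters $D=4,5$ by direct substitution. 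If the multiplicity route proves resistant, the fallback is to test the nonnegativity of the Krein parameters for the $Q$-polynomial ordering and to apply the same non-integrality-or-negativity strategy.
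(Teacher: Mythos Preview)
Your strategy --- rule out the parameter set by showing that some standard feasibility quantity fails to be a nonnegative integer --- is exactly the strategy the paper uses, and you are right that neither the thinness assumption nor the uniform structure is needed for this step. However, what you have written is a plan, not a proof: the entire content of the argument lies in the divisibility analysis you describe as ``the main obstacle'', and you have not carried any of it out. Saying that you would compute $m_1$, cancel powers of $q$, and then ``exhibit a fixed factor of the denominator \ldots\ that divides $M$ to a strictly higher power than it divides $N$'' is a description of a wish, not of an argument; nothing guarantees that $m_1$ (or $m_D$) is the right invariant, and you give no evidence that a single modulus handles all $q\ge 2$ and $D\ge 4$ simultaneously.

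Two concrete points of comparison with the paper's proof. First, your claim that ``no contradiction can arise from the intersection numbers themselves'' is too hasty: you only checked $c_i$, $b_i$, $a_i$, but every $p^h_{ij}$ must be a nonnegative integer, and the paper gets its first and decisive reduction from $p^{6}_{33}$ (for $D\ge 6$), whose denominator contains $q^2+2q+2$ and forces $q\in\{2,4\}$; then $p^{8}_{44}$ eliminates $D\ge 8$. Second, when the paper does turn to multiplicities it uses $f_2$ (the multiplicity of $\theta_2$), not $m_1$ or $m_D$; the formula for $f_2$ has small, tractable denominators $(q+1)^2(q^2+2q+2)(\cdots)$ that mesh with the constraints already obtained from $p^{6}_{33}$. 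The small diameters $D=4,5$ are then handled separately via $f_2$ and $k_2$. So the paper's proof is a case split in $D$ that mixes $p^h_{ij}$-integrality with multiplicity integrality, rather than a single uniform non-integrality of one multiplicity. If you want to salvage your approach, you must actually compute $m_1$ (or better, $f_2$) explicitly for these parameters, isolate the awkward factor in the denominator, and run the Euclidean-style reductions; absent that, there is no proof here.
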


\begin{proof}
Assume first that $D\geq 6$. By \cite[Lemma 4.1.7]{BCN}, we compute the intersection number 
\begin{equation}\label{p633}
p^{6}_{3 3}=\frac{(q+1)(q^2+1) (q^2-q+1) (q^3+2 q^2+2q+2) A_1 A_2 A_3}{(q+2) (q^2+2q+2)},
\end{equation}
where 
$$
 A_1 = q^4+q^3+q^2+q+1, \quad  A_2=q^4+2q^3+2q^2+2q+2, \quad A_3= q^5+2q^4+2q^3+2q^2+2q+2.
$$
It follows that $q^2+2q+2$ divides $(q^2+1) ( q^3+1) (q^3+2 q^2+2q+2) A_1 A_2 A_3$. It is easy to see that
\begin{align*}
	q^2+1 &= (q^2+2q+2)-(2q+1) \\
	q^3+1 &= (q-2) (q^2+2q+2) + 2q+5 \\
	q^3+2q^2+2q+2 &= q(q^2+2q+2)+2 \\
	q^4+q^3+q^2+q+1 &= (q^2 - q + 1)(q^2+2q+2) + q-1\\
	q^4+2q^3+2q^2+2q+2 &=  q^2(q^2+2q+2) + 2(q+1)\\
	q^5+2q^4+2q^3+2q^2+2q+2 &= (q^3+2)(q^2+2q+2) - 2(q+1),
\end{align*}
and so it follows that $q^2+2q+2$ must divide $8 (q-1) (q+1)^2 (2 q+1) (2 q+5)= 8 (4 q^3 + 8 q^2 - 11 q  - 5)(q^2+2q+2)+40(3q +1)$. This further implies that  $q^2+2q+2$ must divide $40(3q +1)$. It is now straightforward to check that the only integers $q \ge 2$, such that $q^2+2q+2$  divides $40(3q +1)$ and the expression \eqref{p633} is an integer, are $q=2$ and $q=4$. Assume for the moment that $q\in\{2,4\}$. If $D\geq8$, using \cite[Lemma 4.1.7]{BCN} again,  it is easy to see that the intersection number $p^8_{4 4}$ is not an integer, implying that $D\in\{6,7\}$. Let $f_2$ denote the multiplicity of the eigenvalue $\theta_2$. Using \cite[Lemma 8.4.3]{BCN}, we easily find that 
$$
f_2 = \frac{(q^D-1)(q^D-q)(q^{D+1}+q^D+2)(q^{D+2}+q^{D+1}-q^2-1)(q^{2D+1}+q^{2D}-q^{D+1}+q^D-2q^3)}
                     {(q-1)^2(q+1)^2(q^{D+1}+q^D-3q+1)(q^{D+1}+q^D-2q^2)},
$$ 
and so $f_2$ is not an integer for $q \in \{2,4\}$ and $D \in \{6,7\}$. This shows that $D \in \{4,5\}$.

Assume now that $D=4$. Note that in this case we have
$$
  f_2 = \frac{q^2(q^2+1)(q^2+q+1)^2(q^3+q^2+1)(q^5+q^4+2)(q^5+2q^4+2q^3+2q^2+q+2)}{(q+1)(q^2+2q+2)(q^4+2q^3+2q^2+2q-1)},
$$
and so $(q^2+2q+2)$ must divide $60(3q+4)$. It is now easy to check that the only integer for which $(q^2+2q+2)$ divides $60(3q+4)$ is $q=2$, but in this case $f_2$ is not an integer. 

Assume finally that $D=5$. Then, we have that 
$$
  k_2= \frac{q^3(q^2+1)(q^4+q^3+q^2+q+1)(q^4+2q^3+2q^2+2q+2)(q^6+2q^5+2q^4+2q^3+2q^2+q+1)}{q+2},
$$
and so $(q+2)$ must divide $60720$.  It is now easy to check that there are no integers $q \ge 2$ for which $f_2$ is an integer and $(q+2)$ divides $60720$, simultaneously. This completes the proof.
\end{proof}

\begin{theorem}\label{fam2}
With reference to Notation~\ref{not3.2}, assume that $D \not \equiv 0 \pmod{6}$. Then, the family of distance-regular graphs with classical parameters 
$$ 
  (D,q,\alpha,\beta)=\Big(D, q, q, \frac{q^2 (q^D-1)}{q-1} \Big).
$$
does not exist.
\end{theorem}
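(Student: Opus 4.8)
The plan is to argue exactly as in the proof of Theorem~\ref{fam1}: substitute the classical parameters $(D,q,q,q^2(q^D-1)/(q-1))$ into the standard closed forms for the intersection numbers \cite[Lemma 4.1.7]{BCN} and the eigenvalue multiplicities \cite[Lemma 8.4.3]{BCN}, obtain explicit rational functions of $q$ and $D$, and then exploit that each such quantity must be a non-negative integer. With $\alpha=q$ the parameters simplify pleasantly: writing $[j]=(q^j-1)/(q-1)$, one computes $c_i=[i]^2$ for $1\le i\le D$, $b_i=q^i[D-i]\cdot\frac{q(q^{D+1}-q^i-q+1)}{q-1}$, and, using the identity $1+q[D-i]=[D-i+1]$, the eigenvalues $\theta_i=q^2[D-i][D]-[i][D-i+1]$. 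These expressions feed directly into the multiplicity formula.

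The core of the argument is to compute a carefully chosen multiplicity $f_m$ (I expect $f_2$ to be decisive, as in Theorem~\ref{fam1}) as an explicit rational function of $q$ and $D$, and to test its integrality. Unlike in Theorem~\ref{fam1} --- where the resulting divisibility conditions were never satisfiable and the whole family was excluded --- here I expect the reduced denominator of $f_m$ to retain the sixth cyclotomic polynomial $\Phi_6(q)=q^2-q+1$, with the $D$-dependence entering the numerator only through factors of the form $q^N-1$. Since $q^3\equiv-1\pmod{q^2-q+1}$, the polynomial $q^2-q+1$ divides $q^N-1$ in $\ZZ[q]$ precisely when $6\mid N$; hence, at the level of polynomial identities, integrality of $f_m$ forces this $\Phi_6(q)$ to be cancelled by a $q^D-1$-type factor, that is, $D\equiv0\pmod 6$. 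Therefore, whenever $D\not\equiv0\pmod 6$ the quantity $f_m$ is not an integer, contradicting the existence of $\Ga$.

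The main obstacle, and the genuinely delicate point, is that the clean equivalence ``$\Phi_6(q)\mid q^D-1 \iff 6\mid D$'' holds over $\ZZ[q]$ but may fail numerically at small $q$: for example $\Phi_6(2)=3$ and the multiplicative order of $2$ modulo $3$ is only $2$, so particular small values of $q$ can produce spurious integrality even when $6\nmid D$. Consequently the polynomial argument settles all sufficiently large $q$, but a finite list of small pairs $(q,D)$ --- including the small diameters $D\in\{4,5\}$, into which $D$ is forced once $q$ is bounded --- must be eliminated individually. As in the small-case analysis of Theorem~\ref{fam1}, this is done by evaluating $f_m$ directly (supplemented when necessary by a second multiplicity or by an intersection number such as $k_i$ or $p^h_{ij}$) and checking non-integrality by hand. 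Verifying coprimality of the surviving numerator factors to $\Phi_6(q)$ for large $q$, and then clearing this finite exceptional list, is where the real work of the proof lies.
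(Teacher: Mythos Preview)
Your strategy---compute eigenvalue multiplicities from \cite[Lemma 8.4.3]{BCN} and exploit cyclotomic divisibility---is exactly what the paper does, but your specific expectations are off in a way that matters for the execution. When you actually compute $f_2$ for these parameters you obtain
\[
f_2=\frac{q^2(q^D-1)(q^{D+1}+1)(q^{2D-2}-q^{D-2}+q^{D-3}-1)}{(q-1)^2(q+1)},
\]
so the obstructing denominator factor is $\Phi_2(q)=q+1$, not $\Phi_6(q)=q^2-q+1$. Reducing the numerator modulo $q+1$ (using $q\equiv-1$) depends only on the parity of $D$; for odd $D$ one finds that $(q+1)\mid 8$, forcing $q\in\{3,7\}$, and each of these is eliminated by a direct $2$-adic check on $f_2$. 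Thus $f_2$ alone yields only that $D$ is even, not that $6\mid D$.

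The factor $3$ requires a second multiplicity: $f_3$ has denominator $(q-1)^3(q+1)(q^2+q+1)$, and now the obstruction is $\Phi_3(q)=q^2+q+1$. Reducing the four numerator factors modulo $q^2+q+1$ (via $q^3\equiv 1$) splits into the residue classes $D\equiv 2$ and $D\equiv 4\pmod 6$; in each case the product of residues leads to a divisibility condition (\,$(q^2+q+1)\mid 18q$ and $(q^2+q+1)\mid 9$, respectively) that fails for every $q\ge 2$. So the conclusion $D\equiv 0\pmod 6$ arises from $\Phi_2$ and $\Phi_3$ acting on \emph{two} multiplicities, not from a single $\Phi_6$-obstruction in $f_2$. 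Your anticipated small-$(q,D)$ cleanup is also slightly misplaced: there is no separate small-diameter analysis here---the only sporadic checks are $q\in\{3,7\}$ with $D$ odd, and they sit inside the $f_2$ step.
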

\begin{proof}
We first claim that $D$ is even. To prove the claim, we compute the multiplicity $f_2$ corresponding to the eigenvalue $\theta_2$. Using \cite[Lemma 8.4.3]{BCN}, we get
 \begin{equation*}\label{f2}
f_2=\frac{q^2(q^D-1) (q^{D+1}+1)(q^{2D-2}-q^{D-2}+q^{D-3}-1)}{(q-1)^2(q+1)},
 \end{equation*}
and so $(q+1)$ must divide the numerator of the above expression. Assume that $D$ is odd. Then, we have
\begin{align*}
	q^2 &=(q-1)(q+1) + 1 \\
	q^D-1 &= (q^{D-1} - q^{D-2} + q^{D-3} - \cdots -q+1)(q+1)-2 \\
	q^{D+1}+1 &= (q^D - q^{D-1} + q^{D-2}  - \cdots +q-1)(q+1) + 2 \\
	q^{2D-2}-q^{D-2}+q^{D-3}-1 &= (A - 2q^{D-3} + 3B)(q+1) + 2,
\end{align*}
where $A=q^{2D-3}-q^{2D-4} + \cdots -q^{D-1} + q^{D-2}$ and $B=q^{D-4}-q^{D-5} + \cdots +q-1$.
This  implies that $(q+1)$ divides $8$, and so $q \in \{3,7\}$. Now if $q=3$, then 
$$
  f_2 = \frac{(3^D-1)(3^{D+1}+1)(3^{2D+1}-2 \cdot 3^D-27)}{48}.
$$
Since $D$ is odd, we have that $(3^D-1)$, $(3^{D+1}+1)$, and $(3^{2D+1}-2 \cdot 3^D-27)$ are all congruent to $2$ modulo $4$, and so $f_2$ is not an integer. Similarly, if $q=7$, then 
$$
f_2 = \frac{(7^D-1)(7^{D+1}+1)(7^{2D+1}-6 \cdot 7^D-343)}{2016}.
$$
Again, since $D$ is odd, we have that $(7^D-1)$, $(7^{D+1}+1)$, and $(7^{2D+1}-6 \cdot 7^D-343)$ are all congruent to $2$ modulo $4$, and so $f_2$ is not an integer. This shows the claim, and so $D$ is even.

In the rest of the proof, we use similar arguments as above, and so we omit the details. Using \cite[Lemma 8.4.3]{BCN} again, we get that the multiplicity $f_3$ corresponding to the eigenvalue $\theta_3$ is equal to 
$$
  f_3 = \frac{(q^D-1)(q^{D+1}+1)(q^{2D+1}-q^{D+1}+q^D-q)(q^{2D-2}-q^{D-2}+q^{D-3}-q^2)}
  {(q-1)^3(q+1)(q^2+q+1)},
$$
which implies that $(q^2+q+1)$ divides the numerator of the above expression. Assume first that $D \equiv 2 \pmod{6}$. Then, it is easy to check that 
\begin{align*}
	q^D-1 &= g_1(q) (q^2+q+1) - (q+2) \\
	q^{D+1}+1 &= g_2(q) (q^2+q+1) + 2 \\
	q^{2D+1}-q^{D+1}+q^D-q &= g_3(q) (q^2+q+1) - 3(q+1) \\
	q^{2D-2}-q^{D-2}+q^{D-3}-q^2&= g_4(q) (q^2+q+1) - (q+2) 
\end{align*}
for certain polynomials $g_1, g_2, g_3$, and $g_4$ with integer coefficients. It follows that $(q^2+q+1)$ divides $6(q+2)^2(q+1)$. As $6(q+2)^2(q+1) = 6(q+4)(q^2+q+1) + 18 q$, this further implies that $(q^2+q+1)$ divides $18 q$. One could now easily check that this is never true for $q \ge 2$. 

Assume finally that $D \equiv 4 \pmod{6}$. Then, it is again easy to check that 
\begin{align*}
	q^D-1 &= h_1(q) (q^2+q+1) + (q-1 )\\
	q^{D+1}+1 &= h_2(q) (q^2+q+1) - q \\
	q^{2D+1}-q^{D+1}+q^D-q &= h_3(q) (q^2+q+1) + (q+2) \\
	q^{2D-2}-q^{D-2}+q^{D-3}-q^2&= h_4(q) (q^2+q+1) + 3(q+1) 
\end{align*}
for certain polynomials $h_1, h_2, h_3$, and $h_4$ with integer coefficients. It follows that $(q^2+q+1)$ divides $3(q-1)q(q+2)(q+1)$. As $3(q-1)q(q+2)(q+1) = 3(q^2+q-3)(q^2+q+1) + 9$, this further implies that $(q^2+q+1)$ divides $9$, a contradiction. This finishes the proof.
\end{proof}

We are now ready to state our main result.

\begin{theorem}
With reference to Notation~\ref{not3.2}, we have that either $\alpha=0$, or $D \equiv 0 \pmod{6}$ and $\Ga$ has classical parameters
$$\Big(D, q, q, \frac{q^2 (q^D-1)}{q-1} \Big).$$
\end{theorem}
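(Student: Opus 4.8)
The plan is to assemble the final theorem directly from the structural results already established, splitting on whether $\alpha = 0$. First I would dispense with the case $\alpha = 0$ trivially: under Notation~\ref{not3.2} this is one of the two alternatives in the conclusion, so nothing needs to be proved there. The substance of the argument is therefore entirely in the case $\alpha \neq 0$, where I can invoke the full machinery of Sections~\ref{sec:locII}--\ref{sec:localgraph}.

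So assume $\alpha \neq 0$. The key reduction is Corollary~\ref{xc}, which already pins the classical parameters down to exactly two candidate families, namely
$$
\Big(D, q, q+1, \tfrac{q^{D+1}(q+1)-q^2-1}{q-1}\Big)
\qquad\text{and}\qquad
\Big(D, q, q, \tfrac{q^2(q^D-1)}{q-1}\Big).
$$
This is the real workhorse: it rests on Theorem~\ref{lem:local1} (the local graph $\Delta$ is strongly regular with the stated eigenvalues and $\beta = \alpha\tfrac{q^{D+1}-1}{q-1}-q$) together with Theorem~\ref{thm:alpha} (which forces $\alpha \in \{q, q+1\}$ via Neumaier's classification of strongly regular graphs with $s < -1$). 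Having this, I would then simply rule out the first family and constrain the second. The first family is eliminated wholesale by Theorem~\ref{fam1}, which shows no distance-regular graph with those parameters exists. For the second family, Theorem~\ref{fam2} shows that unless $D \equiv 0 \pmod 6$ the parameters cannot be realized. Hence in the case $\alpha \neq 0$ the only survivors are graphs with classical parameters $\big(D, q, q, \tfrac{q^2(q^D-1)}{q-1}\big)$ with $D \equiv 0 \pmod 6$, which is precisely the second alternative in the statement.

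Putting the two cases together yields the dichotomy exactly as stated. In short, the proof is a two-line combination: either $\alpha = 0$, or (by Corollary~\ref{xc}) the parameters lie in one of two families, the first excluded by Theorem~\ref{fam1} and the second restricted to $D \equiv 0 \pmod 6$ by Theorem~\ref{fam2}. I do not expect any genuine obstacle at this final assembly stage, since all the hard work has been externalized into the preceding results; the only thing to be careful about is making sure the case split is logically exhaustive and that the $\alpha = 0$ branch is handled by observation rather than omitted. The genuinely difficult steps lie upstream — specifically the delicate divisibility and integrality arguments inside Theorems~\ref{fam1} and~\ref{fam2} (showing certain multiplicities $f_2, f_3$ or intersection numbers fail to be integers), and the appeal to Neumaier's theorem in Theorem~\ref{thm:alpha} — but the final theorem itself is a clean corollary of these.
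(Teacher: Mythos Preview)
Your proposal is correct and matches the paper's own proof essentially verbatim: the paper simply states that the result ``immediately follows from Corollary~\ref{xc}, Theorem~\ref{fam1}, and Theorem~\ref{fam2},'' which is exactly the assembly you describe. Your added commentary about the upstream ingredients (Theorems~\ref{lem:local1} and~\ref{thm:alpha}, Neumaier's theorem) is accurate context but not needed for the final step itself.
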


\begin{proof}
The result immediately follows from Corollary \ref{xc}, Theorem \ref{fam1}, and Theorem \ref{fam2}.
\end{proof}

We finish the paper with a remark and a conjecture. 
\begin{remark}
	\label{rem}
	\rm
With reference to Notation~\ref{not3.2}, assume that  $D \equiv 0 \pmod{6}$ and $\Ga$ has classical parameters
$$
  \Big(D, q, q, \frac{q^2 (q^D-1)}{q-1} \Big).
$$
Then, using \cite[p. 127, formulae (1c)]{BCN} and \cite[Lemma 8.4.3]{BCN}, we find that the valency $k_D$ and the multiplicity $f_D$ of the eigenvalue $\theta_D$ are given by
$$
k_D = q^{\frac{D(D+1)}{2}+1}\prod_{i=1}^{D-1}{\left(q \frac{q^D - 1}{q^{i}- 1} - 1\right)},
$$
$$
f_D = (q^D (q + 1) - q) \prod_{i=2}^{D}{\left(q^{i + 1}\frac{q^D - 1}{q^i - 1} + 1\right)}.
$$
With computer assistance, we obtain that $k_D$ and $f_D$ are never integers for $q,D\le3000$.
\end{remark}

The above remark thus encourages us to put forward the following conjecture.

\begin{conjecture}
There exists no distance-regular graph with classical parameters 
$$
  \Bigg(D, q, q, \frac{q^2 (q^D-1)}{q-1} \Bigg)
$$
with $q \ge 2$ and $D \ge 4$. 
\end{conjecture}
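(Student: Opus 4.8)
The plan is to argue by a dichotomy on whether $\alpha = 0$. If $\alpha = 0$, then the first alternative of the conclusion holds and there is nothing further to prove, so all of the content lies in the case $\alpha \ne 0$. In that case I would assemble three ingredients already established under Notation~\ref{not3.2}: a structural reduction (Corollary~\ref{xc}) that narrows the classical parameters to exactly two candidate families, followed by two arithmetic arguments (Theorems~\ref{fam1} and~\ref{fam2}) that eliminate the first family outright and force $D \equiv 0 \pmod 6$ in the second.

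More concretely, for the reduction I would trace how the uniform-structure hypothesis constrains the local structure at $x$. Since $\Ga$ supports a uniform structure, Theorem~\ref{oldpaper} tells us that every irreducible $T_f$-module is thin and that its isomorphism class is determined by its endpoint and diameter alone. Combined with the fact that distinct endpoint-$1$ modules remain non-isomorphic over $T_f$ when $\alpha \ne 0$ (Lemma~\ref{lem:noniso}) and the diameter dichotomy $d \in \{D-2, D-1\}$ coming from Proposition~\ref{prop:diam} and the classification of local eigenvalues, this forces $\Ga$ to have, up to isomorphism, exactly two irreducible $T$-modules with endpoint $1$. By Proposition~\ref{prop:loc=loc} the local graph $\Delta$ then has at most three distinct eigenvalues, hence is a connected, non-complete strongly regular graph whose eigenvalues and parameters are pinned down in Theorem~\ref{lem:local1}. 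Feeding the resulting integrality constraints into Neumaier's classification (Theorem~\ref{neumaier}), after first excluding the conference-graph and $\eta_4 = 0$ degeneracies (Lemmas~\ref{claim1} and~\ref{claim2}), leaves only $\alpha \in \{q, q+1\}$, and hence exactly the two families recorded in Corollary~\ref{xc}.

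It then remains to treat the two families arithmetically. For the family with $\alpha = q+1$ I would invoke Theorem~\ref{fam1}, which establishes non-existence by computing intersection numbers such as $p^6_{33}$ and $p^8_{44}$ and the multiplicity $f_2$ via the standard formulae of \cite{BCN}, and deriving divisibility conditions (for instance that $q^2+2q+2$ must divide $40(3q+1)$) that fail for all admissible $q \ge 2$ and $D \ge 4$. For the family with $\alpha = q$ I would invoke Theorem~\ref{fam2}, whose argument computes the multiplicities $f_2$ and $f_3$ and shows they fail to be integers unless $D$ is even and, on refining the residue class modulo $6$, unless $D \equiv 0 \pmod 6$. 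Chaining Corollary~\ref{xc}, Theorem~\ref{fam1}, and Theorem~\ref{fam2} then yields exactly the stated conclusion.

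I expect the genuine obstacle to sit in the structural reduction rather than in the final assembly: the delicate point is guaranteeing that $\Delta$ is strongly regular and that exactly two endpoint-$1$ modules survive, since this is precisely where the uniform-structure and thinness hypotheses are used and where the distinction between $T$- and $T_f$-isomorphism classes (Proposition~\ref{tilde}, Lemma~\ref{lem:noniso}) must be handled carefully. Once the parameters are confined to the two families, the remaining difficulty is purely computational, namely keeping the divisibility bookkeeping in Theorems~\ref{fam1} and~\ref{fam2} under control, and the final theorem follows by simply combining these results.
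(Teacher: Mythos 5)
There is a genuine gap, and it is a fundamental one: you have proved the wrong statement. The statement in question is the paper's closing \emph{conjecture}, which asserts unconditional non-existence of distance-regular graphs with classical parameters $\bigl(D, q, q, \frac{q^2(q^D-1)}{q-1}\bigr)$ for \emph{all} $q \ge 2$ and $D \ge 4$ --- with no hypothesis about a uniform structure, no fixed base vertex, and no thinness assumption on $T$-modules. Your argument instead reconstructs the paper's main theorem: the dichotomy on $\alpha$ is vacuous here (the conjectured family has $\alpha = q \ne 0$ built in), the structural reduction via Theorem~\ref{oldpaper}, Lemma~\ref{lem:noniso}, and Corollary~\ref{xc} invokes the uniform-structure hypothesis that the conjecture does not grant, and --- most critically --- Theorem~\ref{fam2} only rules out the family when $D \not\equiv 0 \pmod 6$. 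The case $D \equiv 0 \pmod 6$ is precisely what remains, and it is the \emph{entire} content of the conjecture; your chain of Corollary~\ref{xc}, Theorem~\ref{fam1}, and Theorem~\ref{fam2} leaves it completely untouched.

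The paper itself offers no proof of this statement, which is why it is labeled a conjecture: the only supporting evidence is Remark~\ref{rem}, where the valency $k_D$ and the multiplicity $f_D$ of $\theta_D$ are computed in closed form and verified \emph{by computer} to be non-integral for all $q, D \le 3000$. A genuine proof would require showing non-integrality of such feasibility parameters (or finding some other obstruction) for all $q \ge 2$ and all $D \equiv 0 \pmod 6$, which neither the paper nor your proposal accomplishes. If you wanted to attack the conjecture along the paper's suggested lines, the natural target would be a uniform divisibility argument for the product formulae for $k_D$ or $f_D$ in Remark~\ref{rem}, in the spirit of the residue computations in Theorems~\ref{fam1} and~\ref{fam2} --- but that step is missing entirely from your proposal, and it is where all the difficulty lies.
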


With reference to Notation~\ref{not3.2}, note that if the above conjecture is true, then we must have $\alpha=0$.

\begin{remark}
	\rm
	We would like to point out that the dual polar graphs satisfy assumptions of Notation \ref{not3.2} with $\alpha=0$. We will study this case in detail in our future paper.
\end{remark}

\subsection*{Acknowledgment}
Blas Fernández's work is supported in part by the Slovenian Research Agency (research program P1-0285, research projects J1-2451, J1-3001 and J1-4008). \v{S}tefko  Miklavi\v{c}'s  research is supported in part by the Slovenian Research Agency (research program P1-0285 and research projects J1-1695, N1-0140, N1-0159, J1-2451, N1-0208, J1-3001, J1-3003, J1-4008 and J1-4084).
Roghayeh Maleki and Giusy Monzillo's research are supported in part by the Ministry of Education, Science and Sport of Republic of Slovenia (University of Primorska Developmental funding pillar).



\begin{thebibliography}{10}
	
	\bibitem{BCN}
	Andries Brouwer, Arjeh Cohen, and Arnold Neumaier.
	\newblock Distance-regular graphs. 1989.
	\newblock {\em Ergeb. Math. Grenzgeb.(3)}, 1989.
	
	\bibitem{CN}
	Brian Curtin and Kazumasa Nomura.
	\newblock 1-homogeneous, pseudo-1-homogeneous, and 1-thin distance-regular
	graphs.
	\newblock {\em J. Combin. Theory Ser. B}, 93(2):279--302, 2005.
	
	\bibitem{FMMM}
	Blas Fern{\'a}ndez, Roghayeh Maleki, {\v{S}}tefko Miklavi{\v{c}}, and Giusy
	Monzillo.
	\newblock Distance-regular graphs with classical parameters that support a
	uniform structure: case $ q\leq 1$.
	\newblock {\em arXiv preprint arXiv:2305.08937}, 2023.
	
	\bibitem{GoTer}
	Junie~T. Go and Paul Terwilliger.
	\newblock Tight distance-regular graphs and the subconstituent algebra.
	\newblock {\em European J. Combin.}, 23(7):793--816, 2002.
	
	\bibitem{hou2018folded}
	Li-hang Hou, Bo~Hou, and Suo-gang Gao.
	\newblock The folded $(2 d+ 1)$-cube and its uniform posets.
	\newblock {\em Acta Math. Appl. Sin. Engl. Ser.}, 34(2):281--292, 2018.
	
	\bibitem{JKT}
	Aleksandar Juri\v{s}i\'{c}, Jack Koolen, and Paul Terwilliger.
	\newblock Tight distance-regular graphs.
	\newblock {\em J. Algebraic Combin.}, 12(2):163--197, 2000.
	
	\bibitem{MikTer}
	\v{S}tefko Miklavi\v{c} and Paul Terwilliger.
	\newblock Bipartite {$Q$}-polynomial distance-regular graphs and uniform
	posets.
	\newblock {\em J. Algebraic Combin.}, 38(2):225--242, 2013.
	
	\bibitem{AN}
	Arnold Neumaier.
	\newblock Strongly regular graphs with smallest eigenvalue $-m$.
	\newblock {\em Archiv der Mathematik}, 33:392--400, 1979.
	
	\bibitem{No}
	Kazumasa Nomura.
	\newblock Homogeneous graphs and regular near polygons.
	\newblock {\em J. Combin. Theory Ser. B}, 60(1):63--71, 1994.
	
	\bibitem{Ter1}
	Paul Terwilliger.
	\newblock A new feasibility condition for distance-regular graphs.
	\newblock {\em Discrete Math.}, 61(2-3):311--315, 1986.
	
	\bibitem{OldTer}
	Paul Terwilliger.
	\newblock The incidence algebra of a uniform poset.
	\newblock In {\em Coding theory and design theory, {P}art {I}}, volume~20 of
	{\em IMA Vol. Math. Appl.}, pages 193--212. Springer, New York, 1990.
	
	\bibitem{Terpart1}
	Paul Terwilliger.
	\newblock The subconstituent algebra of an association scheme ({P}art {I}).
	\newblock {\em J. Algebraic Combin.}, 1(4):363--388, 1992.
	
	\bibitem{TerpartII}
	Paul Terwilliger.
	\newblock The subconstituent algebra of an association scheme (part II).
	\newblock {\em J. Algebraic Combin.}, 2:73--103, 1993.
	
	\bibitem{Ter}
	Paul Terwilliger.
	\newblock The subconstituent algebra of a distance-regular graph; thin modules
	with endpoint one.
	\newblock {\em Linear Algebra Appl.}, 356:157--187, 2002.
	
	\bibitem{inequ}
	Paul Terwilliger.
	\newblock An inequality involving the local eigenvalues of a distance-regular
	graph.
	\newblock {\em J. Algebraic Combin.}, 19:143--172, 2004.
	
	\bibitem{w:dual}
	Chalermpong Worawannotai.
	\newblock Dual polar graphs, the quantum algebra {$U_q\left(
		\mathfrak{{sl}}_2\right)$}, and {L}eonard systems of dual {$q$}-krawtchouk
	type.
	\newblock {\em Linear Algebra Appl.}, 438(1):443--497, 2013.
	
\end{thebibliography}
 \end{document}